\documentclass[english,11pt]{smfart}

\setcounter{tocdepth}{2}

\usepackage{etex}

\usepackage{amsbsy}
\usepackage{amsmath,amsfonts,amssymb,amsthm,mathrsfs,mathtools}

\usepackage{bm}

\usepackage[a4paper,vmargin={3.5cm,3.5cm},hmargin={2.5cm,2.5cm}]{geometry}
\linespread{1.2}

\usepackage[font=sf, labelfont={sf,bf}, margin=1cm]{caption}
\usepackage{graphicx}
\usepackage{epsfig}
\usepackage{latexsym}
\usepackage{xcolor}
\usepackage{ae,aecompl}
\usepackage{soul,framed}
\usepackage{comment}

\usepackage{xcolor}
\usepackage[pdfpagemode=UseNone,bookmarksopen=false,colorlinks=true,urlcolor=blue,citecolor=blue,citebordercolor=blue,linkcolor=blue]{hyperref}
\usepackage{smfhyperref}
\usepackage[capitalize]{cleveref}

\usepackage{pstricks}
\usepackage{enumerate}
\usepackage{tikz,animate,media9}						
\usepackage{todonotes}
\usepackage{pifont}
\usepackage{bm,marvosym}
\usepackage{algorithm}
\usepackage{algorithmic}

\usepackage{bbm}

\usepackage{tcolorbox}


\usepackage{natbib}  

\definecolor{aleacolor}{rgb}{0.16,0.59,0.78}

\renewcommand{\cite}{\citet*}

\hypersetup{
	breaklinks,
	colorlinks=true,
	linkcolor=aleacolor,
	urlcolor=aleacolor,
	citecolor=aleacolor}


\newcommand{\ndN}{\mathbb{N}}

\newcommand{\ndR}{\mathbb{R}}


\def\P{\mathbb{P}}

\renewcommand{\Pr}[1]{\mathbb{P}(#1)}

\newcommand{\Prb}[1]{\mathbb{P}\left(#1\right)}

\newcommand{\Ex}[1]{\mathbb{E}[#1]}

\newcommand{\Exb}[1]{\mathbb{E}\left[#1\right]}

\newcommand{\Va}[1]{\mathbb{V}[#1]}


\newcommand{\one}{{\mathbbm{1}}}

\newcommand{\convdis}{\,{\buildrel d \over \longrightarrow}\,}
\newcommand{\convd}{\,{\buildrel d \over \longrightarrow}\,}

\newcommand{\convp}{\,{\buildrel p \over \longrightarrow}\,}


\newcommand{\eqdist}{\,{\buildrel d \over =}\,}



\newcommand{\he}{\mathrm{h}}




\newcommand{\cA}{\mathcal{A}}

\newcommand{\cE}{\mathcal{E}}

\newcommand{\cS}{\mathcal{S}}
\newcommand{\cT}{\mathcal{T}}
\newcommand{\cU}{\mathcal{U}}

\newcommand{\cW}{\mathcal{W}}

\newcommand{\cZ}{\mathcal{Z}}


\newcommand{\mA}{\mathsf{A}}
\newcommand{\mB}{\mathsf{B}}
\newcommand{\mC}{\mathsf{C}}
\newcommand{\mD}{\mathsf{D}}

\newcommand{\mF}{\mathsf{F}}
\newcommand{\mG}{\mathsf{G}}
\newcommand{\mH}{\mathsf{H}}

\newcommand{\mR}{\mathsf{R}}

\newcommand{\mT}{\mathsf{T}}
\newcommand{\mU}{\mathsf{U}}

\newcommand{\mZ}{\mathsf{Z}}


\newcommand{\ind}{\mathrm{ind}}

\newcommand{\ve}{\mathrm{v}}


\newtheorem{theorem}{Theorem}[section]

\newtheorem{corollary}[theorem]{Corollary}
\newtheorem{proposition}[theorem]{Proposition}
\newtheorem{lemma}[theorem]{Lemma}

\newtheorem{definition}[theorem]{Definition}

\numberwithin{equation}{section}

\keywords{planar graphs, local convergence}

\title{\textbf{Graphon convergence of  random cographs}}
\date{}

\author{Benedikt Stufler}
\address[Benedikt Stufler]{Institute of Mathematics, University of Zurich}
\email{benedikt.stufler@math.uzh.ch}

\begin{document}

\vspace {-0.5cm}

\begin{abstract}
We study the behaviour of random labelled and unlabelled cographs with $n$ vertices as $n$ tends to infinity. Our main result is a novel probabilistic limit in the space of graphons. 
\end{abstract}

\maketitle


\section{Introduction}

The collection of cographs may be characterized as the smallest class of finite simple graphs that contains the graph with one vertex and is closed under taking complements and forming finite disjoint unions. Further characterizations and structural properties are known, see~\cite[Thm. 13.7]{MR2063679},~\cite[Thm. 2]{MR619603}, and references given therein. In particular, cographs are related to separable permutations, which received recent attention in~\cite{MR3813988} and \cite{2019arXiv190407135B}.  Cographs were also studied from an enumerative viewpoint, see for example~\cite{MR2154567}, and from an algorithmic viewpoint, see for example~\cite{MR2443114}. They have some relevance in computer science, as certain hard computational problems may be solved in polynomial time if the input is restricted to the class of cographs, see~\cite{MR3942334} and references given therein.

The present work studies the typical properties of random cographs with many vertices. The models under consideration are the graph $\mG_n$ drawn uniformly at random among all cographs with vertices labelled from $1$ to $n$, and the graph $\mU_n$ chosen uniformly among all cographs with $n$ unlabelled vertices. Our main result  states convergence in probability of these models interpreted as random elements in the space of graphons. We recall the essentials of this space in Section~\ref{sec:con} and refer the reader to the book by~\cite{MR3012035} and references given therein for details.

\begin{theorem}
	\label{te:main}
	There is a  graphon $W_\mathrm{1/2}$ such that
	\begin{align}
		\mG_n \convp W_\mathrm{1/2}  \qquad \text{and} \qquad \mU_n \convp W_\mathrm{1/2}.
	\end{align}
\end{theorem}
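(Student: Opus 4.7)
The plan is to identify $W_{1/2}$ as the constant graphon $W\equiv 1/2$ and to establish convergence via induced-subgraph densities. By the characterization of graphon convergence, together with a subsequence argument passing from convergence in probability to almost sure convergence, both $\mG_n\convp W_{1/2}$ and $\mU_n\convp W_{1/2}$ follow once one shows that for every finite graph $H$ on $k$ vertices, the induced density $t_{\mathrm{ind}}(H,\cdot)$ converges in probability to $2^{-\binom{k}{2}}$. By a moment/Chebyshev argument this reduces to computing first and second moments: $k$ uniform vertices of $\mG_n$ should yield an induced subgraph whose law tends to the Erd\H{o}s--R\'enyi $G(k,1/2)$ distribution, and two independent $k$-tuples should asymptotically decouple. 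A first check is that the edge density equals $1/2$ exactly, by the involution $G\mapsto\bar G$ on labelled (resp.\ unlabelled) cographs.

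The main structural tool is the canonical cotree: every cograph corresponds bijectively to a rooted tree whose leaves are its vertices and whose internal nodes carry alternating labels in $\{\cup,\vee\}$, with $uv$ an edge iff $\mathrm{LCA}(u,v)$ is labelled $\vee$. Hence picking $k$ uniform vertices and recording the $k-1$ branch-point labels of the spanning subtree determines the induced graph. Labelled cographs are enumerated by the exponential symbolic method: letting $U(z)$ denote the EGF of cotrees rooted at either a leaf or a $\cup$-node, the forced alternation gives
\begin{equation*}
U(z)-z \;=\; \exp(U(z))-1-U(z), \qquad \text{i.e.,} \qquad \exp(U(z))\;=\;1+2U(z)-z.
\end{equation*}
The inverse $z=1+2u-e^u$ has a unique critical point at $u^{*}=\log 2$, producing a square-root singularity at $z_0=2\log 2-1$ with $U(z_0)=\log 2<\infty$. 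This finite-variance branching behaviour places the random cotree of $\mG_n$ in the universality class of conditioned simply generated trees, whose scaling limit is the Brownian continuum random tree.

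Given this CRT scaling, picking $k$ uniform random leaves produces $k-1$ branch points whose depths jointly rescale by $\sqrt{n}$ to a continuous nondegenerate limit. Because of the forced alternation along root-to-leaf paths, the label of each branch point is determined purely by the parity of its depth (together with that of the root). The key lemma is that the parities of the $k-1$ depths converge jointly in distribution to the uniform law on $\{0,1\}^{k-1}$; this follows from the general principle that an integer random variable whose rescaled value tends to a continuous law has asymptotically uniform parity, combined with a joint version for the $k-1$ branch-point depths. This yields convergence of every induced density to $2^{-\binom{k}{2}}$, and the variance of $t_{\mathrm{ind}}(H,\mG_n)$ is controlled by applying the same argument to two independent $k$-tuples, which in the CRT limit almost surely populate disjoint branches and so decouple.

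For $\mU_n$ one replaces the exponential formula by P\'olya cycle-index machinery, producing an analogous ordinary-generating-function system with a square-root singularity of the same type. Subcritical P\'olya-tree rigidity implies that internal nodes of the cotree have asymptotically trivial automorphisms, and the CRT scaling and parity-uniformity arguments transfer. The chief technical obstacle lies precisely in this transfer: carrying through the cycle-index surgery carefully enough to verify the same scaling limit and the same quantitative joint parity-independence, and handling the two-type alternation of cotree nodes uniformly in both the labelled and unlabelled cases.
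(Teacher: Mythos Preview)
Your identification of the limit as the constant graphon $W\equiv 1/2$ is wrong, and this is not a cosmetic issue: it is refuted by a one-line observation. Cographs are exactly the $P_4$-free graphs, so $t_{\ind}(P_4,\mG_n)=t_{\ind}(P_4,\mU_n)=0$ for every $n$, whereas $t_{\ind}(P_4,W\equiv 1/2)=2^{-\binom{4}{2}}=1/64>0$. Hence the sequence cannot converge to the constant graphon.

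The slip occurs in the sentence ``This yields convergence of every induced density to $2^{-\binom{k}{2}}$.'' You correctly argue that the $k-1$ branch points of the reduced tree acquire asymptotically independent uniform parities, i.e.\ $k-1$ fair coins. But these $k-1$ bits do \emph{not} produce $\binom{k}{2}$ independent edge indicators: the edge $\{i,j\}$ is determined by the sign at $\mathrm{LCA}(i,j)$, and distinct pairs can share the same LCA. Already for $k=3$, two of the three pairs have the same LCA, so the corresponding edge indicators are perfectly correlated; the induced graph on three random vertices is never a labelled $P_3$ with a prescribed centre unless that centre happens to be the ``singleton'' leaf. The correct limiting law of the induced subgraph on $k$ random vertices is the graph $\mH_k^{1/2}$ obtained by taking a uniform leaf-labelled binary tree on $k$ leaves, flipping a fair coin at each of its $k-1$ internal vertices, and declaring $i\sim j$ iff the coin at $\mathrm{LCA}(i,j)$ shows $\oplus$. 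The paper's $W_{1/2}$ is the graphon determined by this family of laws, not the constant $1/2$.

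Apart from this misidentification, your strategy is essentially the paper's: cotree encoding, finite-variance Galton--Watson / simply generated tree representation, Aldous-type skeleton decomposition for $k$ random leaves, and a local-limit parity argument. Two smaller remarks. First, the second-moment/decoupling step is unnecessary: the Diaconis--Janson criterion (Lemma~\ref{le:mega}) says that convergence of $\Ex{t_{\ind}(H,\cdot)}$ for every $H$ already gives convergence in distribution of the random graphon, and since the limit here is deterministic this upgrades automatically to convergence in probability. Second, the unlabelled case is where the real work lies: ``subcritical P\'olya-tree rigidity implies \ldots\ the CRT scaling and parity-uniformity arguments transfer'' hides exactly the content of the paper, namely coupling the random P\'olya tree to a sesqui-type (two-type) Galton--Watson tree via cycle-index surgery and then proving a new skeleton decomposition (Theorem~\ref{te:skeldecomp}) in that setting. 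That step is not a routine transfer.
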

 Here and throughout the rest we use the usual notation $\convp$ and $\convdis$ for convergence in probability and distribution as $n$ tends to infinity.
The limit object is defined in Definition~\ref{def:Wb} below. We expect it to be universal in the sense that it, and tiltet variants $(W_p)_{0 < p < 1}$ of it, arise as limits of a variety of models of dense random graphs. The proof of our main result uses a result  by \cite[Cor. 3.2]{MR2463439} to reduce the task  to distributional convergence of the subgraph induced by any fixed number of randomly selected vertices (or equivalently convergence in probability of induced homomorphism densities). Our main contribution is in the unlabelled case. The bijection between cographs and cotrees relates this to study of heights in trees induced by uniformly selected leaves in a random P\'olya tree $\mA_n$ with $n$ leaves, where each internal vertex has outdegree at least $2$. Using a relation between P\'olya structures and branching processes discovered in~\cite{MR3773800} and the framework of unlabelled enriched trees from~\cite{StEJC2018}, we relate the study of $\mA_n$ to the study of a reducible $2$-type Galton--Watson tree $\mT_n$ conditioned on producing $n$ leaves. We then proceed to give an extension of the skeleton  decomposition of~\cite{MR1207226} to a general setting of reducible $2$-type Galton--Watson trees (also called sesqui-type trees), similar to the one recently given in~\cite{2019arXiv190407135B}. See Theorem~\ref{te:skeldecomp}. As a special case, this translates to an (extended) skeleton decomposition for the P\'olya tree $\mA_n$. This entails for any integer $k \ge 1$ a local limit theorem for the shape and distances in the tree spanned by the root of $\mA_n$ and $k$ uniformly and independently selected leaves. The bijection between cographs and cotrees then allows us to transfer this to convergence of subgraphs induced by uniformly selected vertices in $\mU_n$. We note that the study of various models of unordered unlabelled trees conditioned to have large number of vertices has received growing attention in recent literature, see \cite{MR2829313}, \cite{MR3050512}, \cite{MR3773800}, \cite{doi:10.1002/rsa.20833}. 

\section{Induced subgraph densities and graphons}
\label{sec:con}
\subsection{Graph limits}
\label{eq:gl}

We follow closely the presentation by~\cite{MR2463439}. All graphs considered in the present work are simple. If $G$ is a finite graph and $v_1, \ldots, v_k$ is a sequences of vertices in $G$, we let $G(v_1, \ldots, v_k)$ denote the graph with vertex set $[k]:=\{1, \ldots, k\}$ such that elements $i,j \in [k]$ are adjacent if and only if $v_i$ and $v_j$ are. If $G$ has at least $k$ vertices we define $G[k]' := G	(v_1', \ldots, v_k')$ where the vertices $v_1', \ldots, v_k'$ are selected uniformly at random without replacement. Given a graph $H$ with vertex set $[k]$, the \emph{induced subgraph density} is defined by 
\begin{align}
	\label{eq:tind}
	t_{\ind}(H, G) = \Pr{ G[k]' = H}.
\end{align}
We let $\cU$ denote the countable class of all finite unlabelled graphs. For any unlabelled graph $U \in \cU$ with $k$ vertices we may select an arbitrary  isomorphic graph $H$ with vertex set $[k]$ and define $t_{\ind}(U, G) := t_{\ind}(H, G)$. By symmetry this does not depend on the choice of $H$ and is hence well-defined. Hence we may consider the map
\begin{align}
	\tau_{\ind}: \cU \to [0,1]^\cU, \quad G \mapsto (t_{\ind}(U,G))_{U \in \cU}.
\end{align}
We let $\bar{\cU}$ denote the closure of the image $\tau_\ind(\cU)$ under this mapping, and set $\cU_\infty = \bar{\cU} \setminus \tau_\ind(\cU)$. Note that $[0,1]^\cU$ is a countable product of Polish spaces and hence Polish. This makes the closed subset $\bar{\cU}$ a Polish space as well. The map $t_{\ind}(H, \cdot): \cU \to \bar{\cU}$ has a unique extension $\bar{\cU} \to \bar{\cU}$ that we also denote by $t_{\ind}(H, \cdot)$.

\begin{lemma}[{\cite[Thm. 3.1, Cor 3.2]{MR2463439}}]
	\label{le:mega}
	The following statements are equivalent for any sequence $(\mC_n)_{n \ge 1}$ of random (unlabelled) graphs whose number of vertices $\ve(\mC_n)$ satisfies $\ve(\mC_n) \convp \infty$.
	\begin{enumerate}
		\item $\mC_n$ converges weakly to some random element $\Gamma$ of $\bar{\cU}$.
		\item For any family $(H_i)_{1 \le i \le \ell}$ of finite graphs the vector $(t_{\ind}(H_i, \mC_n))_{1 \le i \le \ell}$ converges weakly.
		\item For any finite graph $H\in\cU$ the density $t_{\ind}(H, \mC_n)$ converges weakly.
		\item For any finite graph $H\in \cU$ the average density $\Ex{t_{\ind}(H, \mC_n)}$ converges.
	\end{enumerate}
	The limits are necessarily given by $(t_{\ind}(H_i, \Gamma))_{1 \le i \le \ell}$, $t_{\ind}(H, \Gamma)$, and $\Ex{t_{\ind}(H, \Gamma)}$.  Moreover, if we require $\Gamma$ in the first condition to be deterministic, then an analogous statement holds with weak convergence replaced by convergence in probability in all conditions.
\end{lemma}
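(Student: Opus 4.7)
The plan is to structure the argument around the embedding $\tau_{\ind}: \cU \to [0,1]^\cU$ and to exploit the compactness of $\bar{\cU}$ as a closed subset of the compact product space $[0,1]^\cU$. The hypothesis $\ve(\mC_n) \convp \infty$ ensures that $t_{\ind}(H, \mC_n)$ is well-defined with probability tending to one for every fixed $H$, so any convention made on the vanishing exceptional event is asymptotically irrelevant and the embedding is essentially total.

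The chain (1) $\Rightarrow$ (2) $\Rightarrow$ (3) $\Rightarrow$ (4) is the routine direction. For (1) $\Rightarrow$ (2) I would observe that the coordinate projections on $\bar{\cU}$ and any finite tuple of them are continuous, so the continuous mapping theorem applied to $\tau_{\ind}(\mC_n) \convd \Gamma$ yields joint weak convergence with limit $(t_{\ind}(H_i, \Gamma))_{1 \le i \le \ell}$. The implication (2) $\Rightarrow$ (3) is immediate upon taking $\ell = 1$, and (3) $\Rightarrow$ (4) follows from bounded convergence applied to $t_{\ind}(H, \mC_n) \in [0,1]$.

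The main content is (4) $\Rightarrow$ (1). Since $\bar{\cU}$ is a closed subset of the compact Polish space $[0,1]^\cU$, it is itself compact, so the sequence $\tau_{\ind}(\mC_n)$ is automatically tight. By Prokhorov's theorem, every subsequence admits a further subsequence converging weakly to some random element $\Gamma$ of $\bar{\cU}$. To identify the limit uniquely I would show that the law of $\Gamma$ is determined by the individual expectations $\Ex{t_{\ind}(H, \Gamma)}$, $H \in \cU$. The key observation is that for any graphs $H_1, \ldots, H_\ell$ with vertex counts $k_1, \ldots, k_\ell$ there is an identity of the form
\begin{align*}
	\prod_{i=1}^\ell t_{\ind}(H_i, G) = \sum_{H} c(H_1, \ldots, H_\ell; H)\, t_{\ind}(H, G) + \varepsilon(G),
\end{align*}
where the sum runs over unlabelled graphs $H$ on $k_1 + \cdots + k_\ell$ vertices, $c(H_1, \ldots, H_\ell; H)$ denotes the probability that a uniformly random ordered partition of $V(H)$ into blocks of the prescribed sizes induces the tuple $(H_1, \ldots, H_\ell)$, and $\varepsilon(G) \to 0$ as $\ve(G) \to \infty$. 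The error term accounts for the discrepancy between sampling the $k_1 + \cdots + k_\ell$ vertices with or without replacement across the $\ell$ blocks. Taking expectations, passing to a convergent subsequence and invoking $\ve(\mC_n) \convp \infty$, the mixed moments $\Ex{\prod_i t_{\ind}(H_i, \Gamma)}$ become explicit linear combinations of the single moments $\Ex{t_{\ind}(H, \Gamma)}$, which by assumption are all determined. Since $\bar{\cU}$ is compact, polynomials in the coordinate functions are dense in $C(\bar{\cU}, \ndR)$ by Stone--Weierstrass, so joint moments determine the law of $\Gamma$. Consequently all subsequential limits coincide, and the entire sequence converges.

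For the final assertion, if $\Gamma$ is required to be deterministic then each coordinate $t_{\ind}(H, \Gamma)$ is a constant, and weak convergence of a $[0,1]$-valued sequence to a constant is equivalent to convergence in probability. Applying this coordinatewise and using that $\bar{\cU}$ inherits a metrisable topology with a countable subbase from $[0,1]^\cU$, one upgrades $\tau_{\ind}(\mC_n) \convd \Gamma$ to $\tau_{\ind}(\mC_n) \convp \Gamma$. The main obstacle in this plan is the sampling-without-replacement bookkeeping that establishes the displayed identity with a quantitatively vanishing error term; once this identity is in hand, compactness of $\bar{\cU}$ together with the Stone--Weierstrass argument delivers uniqueness of subsequential limits essentially automatically.
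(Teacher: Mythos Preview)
The paper does not prove this lemma at all: it is quoted verbatim from \cite[Thm.~3.1, Cor.~3.2]{MR2463439} and used as a black box, so there is no ``paper's own proof'' to compare against. Your sketch is a correct outline of the standard argument and is essentially the one given in the cited reference: compactness of $\bar{\cU}\subset[0,1]^\cU$ supplies tightness, the product-to-sum identity for $\prod_i t_{\ind}(H_i,G)$ (with the $O(1/\ve(G))$ replacement error) reduces all mixed moments of the coordinate functionals to the single expectations $\Ex{t_{\ind}(H,\Gamma)}$, and Stone--Weierstrass on the compact space $\bar{\cU}$ then pins down the law of any subsequential limit. The deterministic addendum is handled exactly as you say, since weak convergence to a constant is convergence in probability and the product metric on $[0,1]^\cU$ is controlled coordinatewise.
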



\subsection{Graphons}

The presentation in this section follows the comprehensive account on graphons in the book by~\cite{MR3012035}.  
A \emph{graphon} (the name comes from the contraction of ``graph-function'') may be defined as a symmetric measurable function
\[
	W: [0,1]^2 \to [0,1].
\]
We call graphons $V$ and $W$ \emph{weakly isomorphic}, if there exist  measure preserving maps $f$ and $g$ fro the unit interval to itself such that $V(f(x), g(y)) = W(f(x), g(y))$ almost everywhere. We let $\widehat{\cW}_{\mathsf{S}}$ denote the collection of graphons viewed up to weak isomorphism. 

Let $G$ be a graph with vertices $v_1, \ldots, v_n$. We interpret $G$  as a graphon $W_G$ by setting it equal to~$1$ on all squares of the form $](a-1)/n, a/n[ \, \, \times \, \, ](b-1]/n, b/n[$ with integers $1 \le a,b \le n$ such that $v_a$ and $v_b$ are adjacent. The function is extended to the unit square by setting it equal to zero everywhere else.

Let $H$ be a graph with vertices $w_1, \ldots, w_k$. 
For integers $1 \le i,j \le k$ we write $i \sim j$ if $w_i$ and $w_j$ are adjacent, and otherwise $i \nsim j$.  The induced subgraph density $t_{\ind}(H,U)$ is defined by sampling $k$ points $X_1, \ldots, X_k$ of the unit square uniformly and independently at random, and setting
\begin{align}
\label{eq:ind}
t_{\ind}(H,U) = \Exb{ \left(\prod_{i \sim j} U(X_i, X_j) \right)  \left(\prod_{i \nsim j}(1- U(X_i,X_j)) \right)}.
\end{align}
For example, if $U \equiv p$ for some constant $p \in [0,1]$ then $t_{\ind}(H,U) = 2^{-\binom{k}{2}}$. Note that interpreting finite graphs as graphons is compatible with the two induced subgraph frequency definitions:
\[
	t_\ind(H,G) = t_{\ind}(H, W_G).
\]

Several equivalent metrics such as the cut metric $\delta_{\square}$ or the sampling metric $\delta_{\text{samp}}$ are in use on the collection $\widehat{\cW}_{\mathsf{S}}$, see \cite{MR3012035} for details. A sequence $V_1, V_2, \ldots$ of points in  $\widehat{\cW}_{\mathsf{S}}$ convergences to a point $V \in \widehat{\cW}_{\mathsf{S}}$ with respect to any of these metrics if and only 
\[
	\lim_{n \to \infty} t_\ind(H,V_n) = t_\ind(H,V)
\]
for all $H \in \cU$. 
The map
\begin{align}
	\label{eq:homeo}
	\widehat{\cW}_{\mathsf{S}} \to \cU_\infty, \qquad W \mapsto \Gamma_W = (t_\ind(H,W))_{H \in \cU},
\end{align}
is  a homeomorphism, see \cite[Rem. 6.1]{MR2463439}. In Lemma~\ref{le:mega}, the assumption $\ve(\mC_n) \convp \infty$ ensures that the limit $\Gamma$ (if it exists) is almost surely an element of $\cU_\infty$.  Hence:
\begin{corollary}
	\label{co:fu}
	Let $(\mC_n)_{n \ge 1}$ be a sequence of random (unlabelled) graphs satisfying $\ve(\mC_n) \convp \infty$. If $\mC_n \convd \Gamma$ in $\bar{\cU}$, then $W_{\mC_n} \convd W$ for a random graphon $W$ satisfying $\Gamma_W \eqdist \Gamma$.
\end{corollary}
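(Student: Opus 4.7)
The plan is to define $W$ as the inverse image of $\Gamma$ under the homeomorphism~\eqref{eq:homeo}, and then to translate the desired convergence $W_{\mC_n} \convd W$ into a statement about finitely many induced-density vectors, where the hypothesis and Lemma~\ref{le:mega} apply directly.

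First I would verify that $\Gamma$ almost surely lies in $\cU_\infty$, which is already noted after~\eqref{eq:homeo} as a consequence of the hypothesis $\ve(\mC_n) \convp \infty$: points of $\tau_\ind(\cU)$ correspond to density profiles of finite graphs, and these are excluded from the support of a weak limit whose approximating vertex counts diverge. Writing $T\colon \widehat{\cW}_{\mathsf{S}} \to \cU_\infty$ for the homeomorphism of~\eqref{eq:homeo}, I then set $W := T^{-1}(\Gamma)$. Since $T^{-1}$ is continuous and hence Borel measurable, $W$ is a bona fide random element of $\widehat{\cW}_{\mathsf{S}}$, and by construction $\Gamma_W = T(W) = \Gamma$, which already gives the equality in distribution claimed in the corollary.

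It remains to establish $W_{\mC_n} \convd W$ in $\widehat{\cW}_{\mathsf{S}}$. By the continuous mapping theorem applied to the homeomorphism $T$ and to its inverse, this is equivalent to
\[
\Gamma_{W_{\mC_n}} \convd \Gamma \quad \text{in } \bar{\cU}.
\]
Because $\bar{\cU}$ is a closed subset of the countable product $[0,1]^\cU$, weak convergence on $\bar{\cU}$ is characterised by weak convergence of every finite-dimensional projection. For any finite family $H_1, \ldots, H_\ell \in \cU$, the compatibility identity $t_\ind(H, G) = t_\ind(H, W_G)$ recorded after~\eqref{eq:ind} identifies the projected vector $(t_\ind(H_i, W_{\mC_n}))_{i=1}^\ell$ with $(t_\ind(H_i, \mC_n))_{i=1}^\ell$, and weak convergence of the latter to $(t_\ind(H_i, \Gamma))_{i=1}^\ell$ is the equivalence between conditions~(1) and~(2) of Lemma~\ref{le:mega} applied to the hypothesis $\mC_n \convd \Gamma$.

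The main conceptual ingredient is the homeomorphism~\eqref{eq:homeo} playing a dual role: it provides a canonical measurable construction of $W$ from $\Gamma$, and simultaneously turns convergence in the abstract cut metric on $\widehat{\cW}_{\mathsf{S}}$ into concrete finite-dimensional convergence of induced subgraph densities. Once these identifications are in place no serious obstacle remains; the argument is a short assembly of standard facts about weak convergence in countable products of Polish spaces.
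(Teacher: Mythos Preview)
Your proposal is correct and follows the same route the paper intends: the corollary is stated immediately after the observation that $\Gamma$ lies a.s.\ in $\cU_\infty$, with the homeomorphism~\eqref{eq:homeo} doing all the work, and your write-up simply unpacks this in more detail via the continuous mapping theorem and finite-dimensional projections. There is nothing to add.
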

Of course, if $\Gamma$ (equivalently $W$) is deterministic, then convergence in probability holds.

\section{Cographs and cotrees}
\label{sec:bij}

Cographs admit a bijective encoding as cotrees. We recall this fact following the presentation by~\cite{MR619603}.

A \emph{cotree} is a rooted unordered tree where the leaves carry labels and the internal vertices carry signs ($\oplus$ or $\ominus$). Each internal node is required to have at least $2$ children. The signs along any path connecting the root and a leaf are required to alternate.

For any finite  non-empty set $X$ of labels there is a bijection $\phi_X$ between the cotrees with leaves labelled by that set, and the cographs with vertices labelled by $X$. Here a cotree consisting of a single labelled vertex corresponds to a cograph consisting of a single vertex with the same label. The general case is defined recursively. Given a vertex in a rooted tree, we may consider the \emph{fringe subtree} at that vertex, which is the maximal subtree rooted at that vertex. We refer to the fringe subtrees at offspring of the root as $\emph{branches}$.  A cotree whose root carries a $\ominus$-sign corresponds to the graph \emph{union} of the cographs corresponding its branches. A cotree whose root carries a $\oplus$-sign corresponds to the \emph{join} operation of the cographs corresponding to its branches. That is, we form the union of these cographs and add an edge between any pair of vertices corresponding to different branches.

The correspondence between cotrees and cographs is known to be compatible with graph isomorphisms, that is,  unlabelled cographs correspond bijectively to unlabelled cotrees. The following easy observation  expresses how adjacency in the cograph is reflected in the cotree.

\begin{proposition}
	\label{pro:ad}
	Two vertices in a cograph are adjacent if and only if their lowest common ancestor in the corresponding cotree has label $\oplus$.
\end{proposition}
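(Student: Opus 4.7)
The plan is to proceed by induction on the number of internal vertices of the cotree (equivalently, on the recursive definition of the cograph--cotree correspondence).

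For the base case, a cotree consisting of a single labelled leaf has no internal vertex and corresponds to a one-vertex cograph; there is no pair of distinct vertices to check, so the statement holds vacuously. For the inductive step, let $T$ be a cotree with root $r$ carrying sign $s\in\{\oplus,\ominus\}$ and branches $T_1,\ldots,T_m$ (with $m\ge 2$), and let $G=\phi_X(T)$ with corresponding branch cographs $G_i=\phi_{X_i}(T_i)$, where $X_i$ is the label set of $T_i$. Pick two distinct vertices $u,v$ of $G$ and split into two cases according to whether they belong to the same branch.

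If $u,v$ both lie in some $G_i$, then the lowest common ancestor of the corresponding leaves in $T$ coincides with their lowest common ancestor in $T_i$ (it is a descendant of the $i$-th child of $r$, hence lies in $T_i$). Since both the union and the join construction preserve all edges and all non-edges within each $G_i$, the vertices $u,v$ are adjacent in $G$ if and only if they are adjacent in $G_i$, and the inductive hypothesis applied to $T_i$ and $G_i$ closes this case. If instead $u\in G_i$ and $v\in G_j$ with $i\neq j$, then their lowest common ancestor in $T$ is exactly the root $r$, carrying sign $s$. By the recursive definition, $u$ and $v$ are adjacent in $G$ precisely when $G$ is the join of the $G_k$'s, i.e.\ when $s=\oplus$; this is the desired equivalence.

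There is no real obstacle here: the statement is essentially a direct unpacking of the recursive definition of $\phi_X$, and the only thing to be careful about is the observation that the lowest common ancestor of two leaves in the same branch $T_i$ is never the root $r$ (since $T_i$ is a subtree strictly below $r$), which is what lets the two cases be handled cleanly. The sign-alternation condition along root-to-leaf paths plays no role in the argument and is not needed for this proposition.
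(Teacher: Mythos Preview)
Your inductive proof is correct. The paper itself does not give a proof of this proposition; it is stated there as an ``easy observation'' and left without argument. Your induction on the recursive structure of the cotree is exactly the natural way to verify it, and your case split (same branch versus different branches) together with the remark that the union and join operations both preserve adjacency within each branch is all that is needed.
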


\section{The limit object}
\label{sec:limobj}

Proposition~\ref{pro:ad} motivates the following generalization of the bijection between cographs and cotrees. We define the class of \emph{generalized cotrees} in the same way as the class of cotrees, but without requiring the signs to alternate on paths from the root to leaves.  For any generalized cotree $D$ we let $\psi(D)$ denote the graph whose vertex set is the set of leaves of $D$, such that two points are adjacent if and only if their lowest common ancestor in $D$ has label $\oplus$.

Given $k \ge 1$  a \emph{proper $k$-tree} is a (planted) plane tree that has precisely $k$ leaves, labelled from $1$ to $k$. The root of a proper $k$-tree is required to have outdegree $1$ and all other internal nodes have outdegree $2$. There are $ 2^{k-1} \prod_{i=1}^{k-1}(2i-1)$ such trees, and each has $2k -1$ edges. We let $\mR_k$ denote the uniformly at random selected proper $k$-tree. The tree $\mR_k$ was shown by \cite{MR1207226} to be the limiting distribution of the genealogical structure of $k$ uniformly selected vertices of large random trees that lie in the universality class of the Brownian tree.

Let $0<p<1$ be a constant. We define a random generalized cotree $\mD_k^p$ by snipping away the root and its only adjacent edge away from $\mR_k$ and assigning to each internal vertex a sign ($\oplus$ or $\ominus$) according to an independent coin flip that yields $\oplus$ with probability $p$. We let $\mH_k^p := \psi(\mD_k^p)$ denote the corresponding random graph. For each unlabelled graph $H \in \cU$ we let $H'$ denote a version with labels $1, \ldots, \ve(H)$ and set $q_{H,p} := \Pr{\mH_n^p= H'}$. Note that this does not depend on the choice of~$H'$.

\begin{definition}
	\label{def:Wb}
For each $0<p<1$ we let $W_p$ denote the graphon corresponding to the family $(q_{H,p})_{H \in \cU}$
	under the homeomorphism \eqref{eq:homeo}.
\end{definition}

Lemma~\ref{le:mega} and Corollary~\ref{co:fu} entail that a sequence $(\mC_n)_{n \ge 1}$ of random finite graphs satisfies 
\begin{align}
	\label{eq:definite}
	\mC_n \convp W_{p} \qquad \text{if and only if} \qquad \mC_n[k]' \convd \mH_k^p \text{ for all $k \ge 1$}.
\end{align}
The motivation for defining this family of tiltings of $W_{1/2}$ is that similarly tilted objects called the Brownian separable permutons are known to arise for classes of random permutations, see~\cite{2017arXiv170608333B} and \cite{2019arXiv190307522B}. This motivates the question, whether graphons from the family $(W_p)_{0<p<1}$ arise for other models of dense random graphs as well.

\section{Random labelled cographs}

In this section we prove Theorem~\ref{te:main} for the labelled case. The unlabelled case, which constitutes the main input of the present work, is treated in Section~\ref{sec:unlabelled}. Consider the collection $\cT$ of unordered rooted trees where each internal vertex has outdegree at least $2$.  A cotree with at least $3$ vertices is obtained in a unique way from such a tree by choosing the sign of the root ($\oplus$ or $\ominus$)  and propagating the signs of the remaining vertices according to the parity of their height. We consider the exponential generating series $\cT(z)$ where $z$ marks the number of leaves. It is easy to see that
\begin{align}
	\cT(z) = z + \sum_{k \ge 2} \frac{\cT(z)^k}{k!}.
\end{align}

This characterizes this class  as a special case of so called Schr\"oder-enriched parenthesizations, see~\cite{MR1284403}. By a general principle (that in this special case is also easy to verify directly) given in \cite[Lem. 6.7 and following paragraphs]{2016arXiv161202580S}, we obtain the following sampling procedure:

\begin{lemma}
	\label{le:samplinglemma}
For $n \ge 3$ we may generate the random cograph $\mG_n$ as follows.
\begin{enumerate}
	\item Let $\tau_n$ denote a Galton--Watson tree conditioned on having $n$ leaves with a critical offspring distribution $\eta$ being given by
	\begin{align}
		\label{eq:xi}
		\Ex{z^\eta} = 2 - \frac{1}{\log(2)} + \sum_{k\ge 2} \frac{z^k \log^{k-1}(2)}{k!}= 2 \left( 1- \frac{1}{\log(2)} \right) + \frac{2^z}{\log(2)} - z.
	\end{align}
	We label the leaves of $\tau_n$ from $1$ to $n$ in a uniformly at random selected way.
	\item Determine the sign of the root of $\tau_n$ according to a fair coin flip. We propagate the signs of the remaining vertices according to the parity of their height. That is, vertices receive the same sign of the root if and only if their height is even.
	\item Apply the bijection from Section~\ref{sec:bij} to form a cograph.
\end{enumerate}
\end{lemma}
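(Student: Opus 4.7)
The goal is to show by a direct probability computation that every cograph on $[n]$ arises with the same probability under the sampling procedure. By the bijection of Section~\ref{sec:bij} together with the alternating-sign constraint (which forces all signs once the root sign is chosen), a cograph on $[n]$ with $n\ge 3$ is equivalent to a pair $(T,\epsilon)$, where $T\in\cT$ carries the leaf labels $1,\ldots,n$ and $\epsilon\in\{\oplus,\ominus\}$ is the root sign. Step~(3) is just the bijection, so it suffices to prove that steps (1)--(2) generate such a pair uniformly at random.

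Before the main calculation I would briefly verify that $\eta$ as defined in~\eqref{eq:xi} is a genuine critical probability distribution. Writing $L:=\log 2$, this reduces to the two identities $\sum_{k\ge 2}L^{k-1}/k! = 1/L - 1$ and $\sum_{k\ge 1}L^k/k! = 1$, both immediate from $e^L=2$. Criticality guarantees $\P(\tau\text{ has }n\text{ leaves})>0$ for all $n$, so the conditioning in step~(1) is well-posed.

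Now fix a pair $(T,\epsilon)$ and let $(d_v)_v$ denote the outdegree sequence of its internal vertices, with $I$ internal vertices in total. Since the leaves of $T$ carry distinct labels, the tree has trivial automorphism group, and hence exactly $\prod_v d_v!$ ordered (plane) trees project onto $T$ upon forgetting the sibling orders; each such ordered shape $S$ has the same GW probability $\P(\tau=S)=\pi(0)^n\prod_v L^{d_v-1}/d_v!$. The key combinatorial observation is that $\sum_v(d_v-1)=n-1$: indeed, the sum of internal outdegrees equals the total number of edges $I+n-1$, so $\sum_v(d_v-1)=(I+n-1)-I=n-1$. Consequently $\prod_v L^{d_v-1}=L^{n-1}$ is a tree-invariant constant.

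Putting everything together, the probability that steps (1)--(2) return $(T,\epsilon)$ is
\begin{align*}
\Prb{\text{output}=(T,\epsilon)} = \frac{1}{2}\cdot\frac{1}{n!}\cdot\prod_v d_v!\cdot\frac{\pi(0)^n\prod_v L^{d_v-1}/d_v!}{\P(\tau\text{ has }n\text{ leaves})} = \frac{\pi(0)^n L^{n-1}}{2\,n!\,\P(\tau\text{ has }n\text{ leaves})},
\end{align*}
which does not depend on $(T,\epsilon)$. I expect no genuine obstacle beyond keeping the combinatorial bookkeeping straight: the weight $L^{d_v-1}/d_v!$ at outdegree $d_v$ was engineered precisely so that the factor $1/d_v!$ absorbs the symmetrization $\prod d_v!$ needed to pass from ordered to unordered trees, and so that the exponents $d_v-1$ sum to the universal constant $n-1$. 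Once this double cancellation is observed, uniformity of the output pair, and hence uniformity of the resulting cograph, is immediate.
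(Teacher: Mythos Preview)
Your argument is correct and is precisely the ``easy to verify directly'' computation that the paper alludes to but does not carry out; the paper itself simply invokes a general lemma on Schr\"oder-enriched parenthesizations from an external reference. Your version is self-contained and makes explicit the two cancellations (the $\prod_v d_v!$ from symmetrization against the $1/d_v!$ in the weights, and the telescoping $\sum_v(d_v-1)=n-1$) that drive the uniformity.

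One small correction: the clause ``criticality guarantees $\P(\tau\text{ has }n\text{ leaves})>0$ for all $n$'' is not accurate as stated---criticality alone does not imply this. What you actually use (and what holds here) is that $\eta$ puts positive mass on $0$ and on every integer $k\ge 2$, so for any $n\ge 1$ there is a tree shape with exactly $n$ leaves and positive $\P(\tau=\cdot)$; this is what makes the conditioning well-posed. You might also note explicitly that the trivial-automorphism claim follows because any rooted-tree automorphism fixing all leaves fixes every vertex (induct on distance to the nearest leaf), since this is the step that justifies the count $\prod_v d_v!$ of plane orderings.
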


The only properties of $\eta$ we are going to use is that it has expected value $\Ex{\eta} = 1$ and finite  variance $\sigma_\eta^2$. We are now ready to prove our main theorem in the labelled case:

\begin{proof}[Proof of Theorem~\ref{te:main} in the labelled case]
Let $k \ge 1$ be given and choose vertices $v_1, \ldots, v_k$ of $\mG_n$ uniformly and independently at random without replacement. By Proposition~\ref{pro:ad} we know that $v_i$ and $v_j$ are adjacent if and only if the lowest common ancestor of the corresponding leaves in $\tau_n$ is labelled with $\oplus$. This depends only on the sign of the root $o$ of $\tau_n$ and the parity of the height of the lowest common ancestor. 

The asymptotic behaviour of the subtree $\tau_n\langle o, v_1, \ldots, v_k\rangle$ spanned by $o$ and the marked vertices is known: Consider the subset $S_n \subset \tau_n  \langle o, v_1, \ldots, v_k \rangle$ of \emph{essential} vertices, given by the vertices $o$, $v_1$, \ldots, $v_k$ and the lowest common ancestors of any subset of these vertices.  Naturally we obtain a tree structure on $S_n$ from the tree structure on $\tau_n\langle o, v_1, \ldots, v_k\rangle$.

 The tree $\tau_n \langle o, v_1, \ldots, v_k \rangle$ is obtained from the tree $S_n$ of essential vertices by blowing up each edge $e$ into a path of some length $s_e$. It was shown in~\cite[Lem. 4.1]{2019arXiv190407135B} in a more general context that $S_n$ converges in distribution to the random proper $k$-tree $\mR_k$. In particular, $S_n$ has with high probability $2k -1$ edges. We may enumerate them in a canonical way  and let $s_1, \ldots, s_k$ denote the length of the corresponding paths in $\tau_n\langle o, v_1, \ldots, v_k\rangle$. It was also shown in the cited result that the vector $\mathbf{s}_n = (s_i)_{1 \le i \le 2k-1}$ admits a scaling limit
 \begin{align}
 	\label{eq:convergence}
 	\frac{\sqrt{\Pr{\eta = 0}} \sigma_\eta}{\sqrt{n}} \mathbf{s}_n \convdis \mathbf{s}
 \end{align}
 with the distribution of $\mathbf{s}$ having density
 \begin{align}
 	\label{eq:density}
 	3 \cdot 5 \cdots (2k-3) \left(\sum_{i=1}^{2k-1} x_i\right) \exp\left(- \frac{1}{2} \left(\sum_{i=1}^{2k-1} x_i\right)^2 \right), \qquad (x_i)_{1 \le i \le 2k-1} \in \ndR_{>0}^{2k-1}.
 \end{align}
 
 For each integer $s$ we let $\mathrm{par}(s)$ denote the parity (even or odd) of its length. The  corresponding local limit theorem~\cite[Lem. 4.2]{2019arXiv190407135B} for $\frac{\sqrt{\Pr{\eta = 0}} \sigma_\eta}{\sqrt{n}}\mathbf{s}_n$ entails
  that the vector $(\mathrm{par}(s_i))_{1 \le i \le 2k-1}$ converges in distribution to $2k-1$ independent fair coin flips. Consequently, the parity of the heights $(\mathrm{par}(\he_{\tau_n}(x))_{x \in S_n \setminus \{o\}})$ converges to a vector of $2k-2$ independent fair coin flips. By the definition of $\mH_k^{1/2}$, this entails
  \[
  	\mG_n[k]' \convdis \mH_k^{1/2}.
  \]
  As this holds for all $k \ge 1$,  $\mG_n \convp W_{1/2}$ follows by Condition~\eqref{eq:definite}.
 \end{proof}

  We remark that \cite{MR1207226} proved a central limit theorem as in~\eqref{eq:convergence} and also a corresponding local limit theorem  for critical Galton--Watson trees (with a finite variance branching mechanism) conditioned  on having $n$ \emph{vertices}. The results from~\cite[Lem 4.1, Lem. 4.2]{2019arXiv190407135B} that we applied here to $\tau_n$ (a Galton--Watson tree with $n$ leaves) are extensions of his work.

\section{Random unlabelled cographs}
\label{sec:unlabelled}

In this section we prove Theorem~\ref{te:main} for the unlabelled case. In principle we  pursue a similar strategy as for the labelled case, but the dealing with the symmetries of cographs requires us to use  methods for P\'olya structures~\cite{MR3773800} and unlabelled enriched trees~\cite{StEJC2018},  and to establish another extension of the skeleton decomposition by~\cite{MR1207226}.

\subsection{Enumerative preliminaries}
We let $\cA$ denote the class of unlabelled unordered rooted trees where each internal vertex has outdegree at least $2$. By the discussion in Section~\ref{sec:bij}, for any $n \ge 3$ there is a $1:2$ correspondence between trees from $\cA$ with $n$ leaves and unlabelled cographs with $n$ vertices. We let $\cA(z)$ denote the ordinary generating series of the class $\cA$ where $z$ counts the number of leaves. Any element from $\cA$ is either a tree consisting of a single vertex or a multiset of at least $2$ branches from $\cA$ dangling from a root vertex. This yields
\begin{align}
	\label{eq:yay}
	\cA(z)	& = z + \exp\left( \sum_{i\ge 1} \cA(z^i) /i \right) -1 - \cA(z).
\end{align}
We may rewrite this as $\cA(z) = E(z, \cA(z))$ for
\begin{align}
	\label{eq:alsoyay}
	E(z,y) = z + \exp(y) \exp\left(\sum_{i \ge 2} \cA(z^i)/i\right) -1 -y.
\end{align}
The following asymptotic is well known, see for example~\cite{MR2154567}. We are going to use some of the intermediate steps in the proof later on.
\begin{proposition}
	\label{pro:analytic}
	It holds as $n \to \infty$
	\begin{align}
		[z^n] \cA(z) \sim c_\cA n^{-3/2} \rho^{-n},
	\end{align}
	with $0< \rho <1$ and
	\begin{align}
	\label{eq:expr}
	c_\cA = \sqrt{\frac{\rho E_z(\rho, \cA(\rho)) }{2 \pi E_{yy}(\rho, \cA(\rho)}} \qquad \text{and} \qquad E_y(\rho, \cA(\rho)) = 1.
	\end{align}
\end{proposition}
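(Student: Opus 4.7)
The plan is to treat the equation $\cA(z) = E(z,\cA(z))$ as an instance of the smooth implicit function schema of Drmota--Lalley--Woods (see e.g.\ Flajolet--Sedgewick, Ch.~VII). Once this schema is shown to apply, $\cA$ acquires a square-root singularity at $z=\rho$, and the stated asymptotic, together with the explicit constant $c_\cA$, follows by the standard transfer theorem
\[
[z^n]\sqrt{1 - z/\rho} \sim -\tfrac{1}{2\sqrt{\pi}}\, n^{-3/2}\, \rho^{-n}.
\]

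The first step is to show that $\cA$ has radius of convergence $\rho$ with $0<\rho<1$. The inequality $\rho>0$ comes from a direct termwise comparison with plain P\'olya trees (or by truncating the recursive specification and invoking Pringsheim), while $\rho<1$ follows from the observation that the number of such trees with $n$ leaves grows at least exponentially (one can glue on unordered cherry configurations). The decisive analytic consequence of $\rho<1$ is that for each $i\ge 2$ the series $\cA(z^i)$ converges for $|z|<\rho^{1/i}\ge \sqrt{\rho}$, so the function
\[
H(z) := \exp\!\Bigl(\sum_{i\ge 2} \cA(z^i)/i\Bigr)
\]
is analytic on an open disk strictly containing $\{|z|\le \rho\}$. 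Therefore $E(z,y) = z + H(z)e^y - 1 - y$ is jointly analytic in a neighbourhood of $(\rho, \cA(\rho))$, has nonnegative Taylor coefficients, and satisfies $E_{yy}(z,y) = H(z)e^y>0$. Aperiodicity is immediate since $[z^1]\cA = 1$ and $\cA$ has positive coefficients at all sufficiently large orders.

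With these ingredients in hand, the smooth implicit function schema identifies $\rho$ as the unique dominant singularity of $\cA$, certifies the characteristic equation $E_y(\rho, \cA(\rho)) = 1$, and produces the Puiseux expansion
\[
\cA(z) = \cA(\rho) - \sqrt{\tfrac{2\rho E_z(\rho,\cA(\rho))}{E_{yy}(\rho,\cA(\rho))}}\,\sqrt{1 - z/\rho} + O(1 - z/\rho).
\]
Applying the transfer theorem and squaring the prefactor recovers the claimed form of $c_\cA$. The main obstacle is technical rather than conceptual: one must genuinely verify that $H(z)$ extends analytically strictly past $\rho$ (which is exactly where $\rho<1$ is used) and that $\rho$ is the only singularity on the circle of convergence, the latter requiring one to rule out other solutions of $E_y(z,\cA(z))=1$ with $|z|=\rho$. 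This uniqueness-of-singularity step is the delicate piece that ties the whole argument together, while the remainder of the proof can be imported almost verbatim from the P\'olya-tree literature.
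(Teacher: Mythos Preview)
Your proposal is correct and follows essentially the same route as the paper: establish $0<\rho<1$, use $\rho<1$ to see that the cycle-index contribution $\sum_{i\ge 2}\cA(z^i)/i$ (hence $E$) extends analytically past $|z|=\rho$, and then invoke a packaged implicit-function/singularity-analysis theorem to obtain both the square-root singularity and the identity $E_y(\rho,\cA(\rho))=1$. The paper cites the same circle of results (via \cite[Thm.~28]{MR2240769}) that you call Drmota--Lalley--Woods. Two minor remarks: the paper additionally argues explicitly that $\cA(\rho)<\infty$ (from $\cA(z)\ge c\,\cA(z)^2$), a point your outline uses but does not justify; and the ``delicate'' uniqueness-of-singularity step you flag is in fact automatic once aperiodicity is checked, so you are being more cautious than necessary there.
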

Here $E_z$, $E_y$, and $E_{yy}$ denote partial derivatives. 

\begin{proof}[Proof of Proposition~\ref{pro:analytic}]	
	Equation~\eqref{eq:yay} entails that $\cA(z) \ge z$ and consequently 
	\begin{align}
		\label{eq:lowerbound}
		\cA(z) \ge \exp\left( \sum_{i\ge 1} \cA(z^i) /i \right) -1 - \cA(z) \ge \exp\left( \sum_{i\ge 1} z^i /i \right) - 1 - \cA(z).
	\end{align} Hence it is not possible that $\cA(t)< \infty$ for some $t>1$, because substituting $z=t$ would mean that the left hand side of this inequality is finite but the right hand side isn't. In other words, $\cA(z)$ has radius of convergence $\rho \le 1$. Equation~\eqref{eq:yay} also entails that $\cA(z) \ge c \cA(z)^2$ for some constant $c>0$. Hence it is not possible that $\cA(\rho) = \infty$, because then we could find a value $z = \rho - \epsilon$ for some $\epsilon>0$ for which the left hand side of this inequality is bigger than the right hand side. Having verified that $\cA(\rho)<\infty$, Inequality~\eqref{eq:lowerbound} also entails that $\rho<1$, since the sum inside of the exponential function would be infinite for $z=\rho$ otherwise. Moreover, $\rho=0$ is not possible, as the coefficient $[z^n]\cA(z)$ is bounded by the number of plane trees with $n$ leaves, and their generating series is known to have positive radius of convergence. Summing up, we have shown that
	\begin{align}
		\label{eq:analytic1}
		0 < \rho < 1 \qquad \text{and} \qquad \cA(\rho) < \infty.
	\end{align}
	Note that $\rho<1$ entails that $\sum_{i \ge 1} \cA( (\rho + \epsilon)^i)/i < \infty$ for $\epsilon>0$ small enough. Hence
	\begin{align}
		\label{eq:analytic2}
		E(\rho + \epsilon, \cA(\rho + \epsilon)) < \infty.
	\end{align}
	Inequalities~\eqref{eq:analytic1} and \eqref{eq:analytic2} allow us to apply standard enumerative results, see \cite[Thm. 28]{MR2240769}, yielding
	\begin{align}
		[z^n] \cA(z) \sim c_\cA n^{-3/2} \rho^{-n}
	\end{align}
	for $c_\cA$ being given by the expression in~\eqref{eq:expr}. Note that by Pringsheim's theorem the function $\cA(z)$ may not be analytically continued to a neighbourhood of $\rho$. Hence by the implicit function theorem it must hold that the function $H(z,y) := y - E(z,y)$ satisfies
	\begin{align}
		0 = H_y(\rho, \cA(\rho)) = 1 - E_y(\rho, \cA(\rho)).
	\end{align}
	This completes the verification of Equation~\eqref{eq:expr}.
\end{proof}

\subsection{An enriched tree sampling procedure}

Proposition~\ref{pro:analytic} allows us to define a  random vector $(\xi, \zeta) \in \ndN_0 \times \ndN_0$ with generating series
\begin{align}
\label{eq:gen}
\Ex{z^\xi w^\zeta} 	
&= \frac{1}{\cA(\rho)} \left(\rho  +  \exp\left(\cA(\rho) z + \sum_{i \ge 2} \cA( \rho^i w^i)/i\right) - 1 - \cA(\rho)z \right).
\end{align}
Note that $(\xi, \zeta)$ has finite exponential moments by~\eqref{eq:analytic2}, that is for some $\epsilon>0$
\begin{align}
	\label{eq:cond1}
	\Ex{(1+ \epsilon)^\xi (1+ \epsilon)^\zeta} 	< \infty.
\end{align}
Moreover,  Equation~\eqref{eq:expr} entails 
\begin{align}
	\label{eq:cond2}
\Ex{\xi} = 1.
\end{align}
We let $\mT$ denote a  Galton--Watson tree with two types of vertices, blue and red. Red vertices are infertile. Blue vertices generate offspring according to an independent copy of $(\xi, \zeta)$, with $\xi$ corresponding to blue offspring and $\zeta$ to red offspring. Of course, $\mT$ always starts with a blue root. Note that we care about the order of children with the same type, but not about the order between children with a different type.


We are going to make use of a blow-up construction. Suppose that for each pair $(a,b) \in \ndN_0 \times \ndN_0$ that the vector $(\xi, \zeta)$ assumes with positive probability, we are given a random  forest $\mathsf{F}_{a,b}$ of rooted plane trees with precisely $b$ leaves in total. In order to not get confused we colour the vertices of this forest green.  Let's say $T$ is a fixed (blue,red)-coloured tree that the Galton--Watson tree $\mT$ assumes with positive probability. We transform the  tree $T$ into a (blue,green)-coloured tree $\Lambda(T)$ by performing the following three steps for each blue vertex $v$ of $T$.
\begin{enumerate}
	\item Let $a(v)$ denote the number of blue children of $v$ and $b(v)$ the number of red children of $v$. Let  $\mathsf{F}(v)$ denote an independent copy of the random forest $\mathsf{F}_{a(v),b(v)}$.
	\item Delete all red offspring of $v$.
	\item For each tree in the forest $\mathsf{F}(v)$ add an edge between its root and the vertex $v$.
\end{enumerate}
Note that these local modifications do not change the blue subtree of $T$.

The tree $\Gamma(\mT)$ is a $2$-type plane tree with vertices coloured blue and green according to their type. We let $\cA(\Gamma(\mT))$ denote the result of applying a ``forgetful functor'' that reduces this to a rooted unordered unlabelled tree. We let $\ell(\cdot)$ denote the function that sends a tree (or a forest) to its number of leaves. The following Lemma is based on hidden branching processes in P\'olya structures  discovered in~\cite{MR3773800}.

\begin{lemma}
	\label{le:sampling}
	There is a family of random finite forests $(\mF_{a,b})_{a,b}$ such that $\cA(\Lambda(\mT))$ follows the Boltzmann distribution
	\begin{align}
		\Pr{\cA(\Gamma(\mT)) = A} = \rho^{\ell(A)}/\cA(\rho), \qquad A \in \cA.
	\end{align}
\end{lemma}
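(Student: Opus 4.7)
The plan is to realise the hidden cyclic structure of the P\'olya multiset construction appearing in~\eqref{eq:yay} through the blow-up procedure, so that applying the forgetful functor to $\Lambda(\mT)$ exactly recovers the Boltzmann distribution on $\cA$ at parameter $\rho$. The starting point is the factorisation
\begin{align*}
\exp\Big(\sum_{i\ge 1}\cA(\rho^i)/i\Big) = \exp(\cA(\rho)) \cdot \exp\Big(\sum_{i \ge 2}\cA(\rho^i)/i\Big),
\end{align*}
which in the generating function~\eqref{eq:gen} of $(\xi,\zeta)$ corresponds to isolating $\xi$ as the Poisson-driven number of ``direct'' branches at a blue vertex, and $\zeta$ as the total number of leaves contributed by the remaining cyclic part of the multiset. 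The law of $(\xi,\zeta)$ is tailor-made so that with probability $\rho/\cA(\rho)$ it takes the value $(0,0)$ (corresponding to leaves of the output tree), and otherwise its two coordinates arise independently from the two exponential factors above.

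I would then define $\mF_{a,b}$ as follows. The second factor is the generating function of a random finite multiset of pairs $(i,A)$ with $i \ge 2$ and $A \in \cA$, each pair encoding a cycle of length $i$ made of $i$ identical copies of $A$ and weighted by $\rho^{i\ell(A)}/i$, with the $A$'s in turn sampled from the Boltzmann distribution on $\cA$. Conditioning this multiset on producing exactly $b$ leaves in total, I would turn each pair $(i_j,A_j)$ into a plane tree $T_j$ with $i_j \ell(A_j)$ leaves in a canonical way that records both the cycle length and the underlying branch. The forest $\mF_{a,b} = (T_1,\ldots,T_m)$ then has the prescribed leaf count $b$.

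The verification of the identity $\Pr{\cA(\Lambda(\mT)) = A_0} = \rho^{\ell(A_0)}/\cA(\rho)$ proceeds by structural induction on $A_0$. The base case $\ell(A_0) = 1$ is simply the identity $\Pr{(\xi,\zeta) = (0,0)} = \rho/\cA(\rho)$. For the inductive step, write $A_0$ as a multiset of branches $\{B_j^{m_j}\}_j$ and decompose each multiplicity $m_j$ into a ``direct'' part (to be realised as blue children) and contributions of cycles of length $\ge 2$ (to be realised via $\mF(v)$). Summing over all such decompositions and all plane presentations of the resulting cycles collapses, via the P\'olya cycle-index identity $\exp(\sum_{i\ge 1}\cA(z^i)/i) = \sum_{M \in \mathrm{MSET}(\cA)} z^{\ell(M)}$, to the product of the offspring mass of $(\xi,\zeta)$ and the inductive Boltzmann weights of the branches, which by~\eqref{eq:yay} rearranges to $\rho^{\ell(A_0)}/\cA(\rho)$.

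The main obstacle is this last combinatorial bookkeeping: I must check that the forgetful functor collapses the plane structures in the $T_j$'s and the ordering of blue versus green children of each blue vertex exactly in line with the P\'olya symmetry factors needed to reconstruct an unordered multiset of branches of $A_0$ with the correct multiplicity. This is the crux of the construction, and it relies on the same hidden-branching-process mechanism for P\'olya structures developed in~\cite{MR3773800} and on the framework of unlabelled enriched trees of~\cite{StEJC2018}, in both of which analogous constructions are carried out in detail.
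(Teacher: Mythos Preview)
Your construction is essentially the same as the paper's: both isolate the fixed-point part of the P\'olya exponential (blue children, counted by $\xi$) from the cycles of length $\ge 2$ (green forests, whose total leaf count is $\zeta$), and both define $\mF_{a,b}$ by conditioning the green part on producing $b$ leaves.

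The difference lies in how correctness is established. The paper does not argue by structural induction. Instead it writes down the full recursive Boltzmann sampler $\Gamma\cA(x)$ explicitly at the cycle-index level: draw a random permutation $\gamma$ with the appropriate law, and for each cycle length $i$ make $i$ identical copies of an independent call to $\Gamma\cA(x^i)$. That this procedure realises the Boltzmann law~\eqref{eq:fob} is then a direct citation of the general machinery in~\cite[Thm.~4.2]{MR2810913}. Only after that does the paper colour vertices blue/green, observe that $(\mathbf{f},\ell(\mathbf{F}))\eqdist(\xi,\zeta)$, and set $\mF_{a,b}=(\mathbf{F}\mid\mathbf{f}=a,\ell(\mathbf{F})=b)$. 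In short, the paper outsources exactly the ``combinatorial bookkeeping'' you flag as the main obstacle.

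Your induction outline is plausible but, as you say yourself, not carried out; what you have written is really a plan rather than a proof. Two small points worth tightening if you pursue your route: (i) when you write ``the $A$'s in turn sampled from the Boltzmann distribution on $\cA$'', the parameter must be $\rho^i$, not $\rho$; (ii) encoding each pair $(i_j,A_j)$ as a single plane tree $T_j$ is more awkward than the paper's choice of simply taking the $i_j$ identical copies as separate trees in the forest --- the latter makes the action of the forgetful functor transparent, since one then literally sees a multiset of branches at each blue vertex.
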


The distribution of the family $(\mF_{a,b})_{a,b}$ is rather technical, but it will be described in the proof. Its existence suffices for the proof our main theorem, which is why we present Lemma~\ref{le:sampling} in this way.

What is important is the big picture, that may be summarized by the following three points: First, the uniform unlabelled cograph $\mU_n$ with $n \ge 3$ vertices may be generated by sampling a uniform $\cA$-tree $\mA_n$ with $n$ leaves, assigning signs to its vertices according to a single fair coin flip, and applying the bijection from Section~\ref{sec:bij}. Second, the tree $\mA_n$ may be generated by
 conditioning $\cA(\Lambda(\mT))$ on producing a tree with $n$ leaves, which is equivalent to conditioning $\mT$ on $\ell(\mT) =n$. That is, 
 \begin{align}
 	\label{eq:poly}
 	\mA_n \eqdist \cA(\Lambda(\mT_n))
 \end{align} 
 for $\mT_n := (\mT \mid \ell(\mT) = n)$.
 Third, $\mT$ being a sesqui-type branching tree, this opens the door to methods for branching processes and random walk.

\begin{proof}[Proof of Lemma~\ref{le:sampling}]
Boltzmann sampling methods~(\cite{MR2095975,MR2810913,MR2498128}) allow us to translate the specification~\eqref{eq:yay} \emph{mechanically} into a process for generating trees from $\cA$ at random. This results for each parameter $0<x \le \rho$ in a recursive procedure $\Gamma \cA(x)$ that samples a random tree from $\cA$ with the \emph{Boltzmann distribution}
\begin{align}
	\label{eq:fob}
	\Pr{\Gamma \cA(x) = A} = x^{\ell(A)}/\cA(x).
\end{align}
For each $k \ge 0$ we let $\cS_k$ denote the symmetric group of degree $k$. Note that $\cS_0$ has a single trivial element.  For any permutation $\sigma$ and any integer $i \ge 1$ we let $\sigma_i$ denote the number of cycles of length $i$. For example, $\sigma_1$ corresponds to the fixed points. The procedure is defined as follows.
\begin{enumerate}
		\item Start with a single root vertex $v$.
		\item Draw a random permutation $\gamma$ that assumes a value $\sigma \in \bigcup_{k \ge 0} \cS_k$  with probability
		\[
			\Pr{\gamma = \sigma} = \frac{1}{\cA(\rho)} [s_1^{\sigma_1} s_2^{\sigma_2}\cdots] \left(\rho +  \exp\left(\cA(\rho) s_1 + \sum_{i \ge 2} \cA(\rho^i) s_i /i\right) - 1 - \cA(\rho)s_1 \right).
		\]
		Here $s_1, s_2, \ldots$ denote formal variables and $[s_1^{\sigma_1} s_2^{\sigma_2} \cdots]$ means that we extract the coefficient of the monomial $\prod_{i \ge 1} s_i^{\sigma_i}$.
		\item For each $i \ge 1$ do the following. Sample $\gamma_i$ independent copies $A_{i,1}, \ldots, A_{i,\gamma_i}$ of $\Gamma \cA(x^i)$ (via recursive calls to this procedure). For each $1 \le j \le \gamma_i$ make $i$ identical copies $A_{i,j,1}, \ldots, A_{i,j,i}$ of $A_{i,j}$. For each $1 \le j \le \gamma_i$ and each $1 \le k \le i$ add an edge between $v$ and the root of $A_{i,j,k}$. 
\end{enumerate}
Note that we do nothing in step $3$ if and only if the random permutation $\gamma$ drawn in step $2$ equals the trivial permutation from $\cS_0$.
Of course, we need to justify that the recursive procedure $\Gamma \cA(x)$ terminates almost surely and samples according to the distribution~\eqref{eq:fob}. A justification  is given by~\cite[Thm. 4.2]{MR2810913} in a more general context for classes that may be recursively specified as in~\eqref{eq:yay} using operations such as sums and multiset classes.

 Proposition~\ref{pro:analytic} allows us to start the process with parameter $x=\rho$. Let us colour the root (generated in step $1$) of $\Gamma \cA(\rho)$ and each recursive  call to $\Gamma \cA(\rho)$ blue. The vertex generated in the first step of any call to $\Gamma \cA(\rho^i)$ for $i \ge 2$ gets coloured green.  Note that the result is a tree where any green vertex has only green descendants. That is, it consists of a tree of blue vertices, where each (blue) vertex $v$ is connected via single edges to a forest $F(v)$ of green trees.  Compare with \cite[Fig. 1]{MR3773800}. 

We let $\mathbf{f}$ denote the number of blue offspring of the root of $\Gamma \cA(\rho)$, and $\mathbf{F}$ the green forest attached to the root. We may reformulate the procedure $\Gamma \cA(\rho)$ as follows. Start with a blue vertex that is marked unvisited. In each step, we select an arbitrary unvisited blue vertex which is then marked as visited and receives blue unvisited offspring and a green forest according to an independent copy of $(\mathbf{f}, \mathbf{F})$. These steps are repeated until the process dies out. 

The pair $(\mathbf{f}, \ell(\mathbf{F}))$ is distributed like the vector $(\xi, \zeta)$ described in Equation~\eqref{eq:gen}. Defining  for all pairs $(a,b) \in \ndN_0 \times \ndN_0$ with $\Pr{(\xi, \zeta) = (a,b)}>0$ the conditioned forest $\mF_{a,b} = (\mathbf{F} \mid (\mathbf{f}, \ell(\mathbf{F})))$ to be independent from $(\xi, \zeta)$, it holds that
\begin{align}
	(\mathbf{f}, \mathbf{F}) \eqdist \mF_{\xi, \zeta}.
\end{align} 
Consequently,
\begin{align}
\cA(\Lambda(\mT)) \eqdist \Gamma \cA(\rho)
\end{align}
follows the Boltzmann distribution from Equation~\eqref{eq:fob} for $x = \rho$.
\end{proof}

\subsection{Spooky scary skeletons}

The skeleton decomposition by
\cite{MR1207226} is a local limit theorem for the subtree spanned by any fixed number of random vertices in a critical Galton--Watson tree (subject to a finite variance constraint on the branching mechanism) conditioned on having a large number of vertices.

In this section we are going to extend his result (following closely his arguments) to conditioned sesqui-type trees ($2$-type Galton--Watson trees where only one type is fertile) and obtain additional information on the vicinity of the joints. A similar extension was recently given in~\cite{2019arXiv190407135B} for mono-type Galton--Watson trees conditioned on having a large number of vertices with outdegree in some fixed set. The limits we are going to establish in this section also generalize earlier results by~\cite[Thm. 27, Thm. 24]{StEJC2018}, that describe the $o(\sqrt{n})$-neighbourhood of the vicinity of a fixed or random root in sesqui-type trees and related random unlabelled graphs and trees, including P\'olya trees.

Throughout this section, we let $\mT$ denote a general sesqui-type tree with a non-degenerate offspring  distribution $\bm{\eta} := (\xi, \zeta)$  satisfying Conditions~\eqref{eq:cond1} and \eqref{eq:cond2}. We additionally make the aperiodicity assumption
\begin{align}
	\label{eq:aperiodic}
	\gcd\{k \ge 0 \mid \Pr{\xi= k}\} = 1.
\end{align}
 We keep the notation that vertices of the first type are coloured blue, and vertices of the infertile second type are coloured green. We let $\mT_n$ denote the result of conditioning $\mT$  on having $n$ leaves. Of course, we will later go back to applying this result to the specific offspring distribution described in Equation~\eqref{eq:gen}. However, stating our extension in this more general form comes at no extra cost and may prove useful in other contexts, as sesqui-type trees have received some attention in recent literature such as~\cite{2017arXiv170600283J}, and are connected to the behaviour of a variety of random objects, such as Achlioptas processes~\cite{2017arXiv170408714R} and random unlabelled graphs~\cite{StEJC2018}.

\subsubsection{Preliminary properties of $\mT$}

The blue subtree of $\mT$ follows the law of a mono-type Galton--Watson tree with offspring law $\xi$. Assumption~\eqref{eq:cond2} entails that it is critical, and hence almost surely finite. It follows that $\mT$ is almost surely finite.  Moreover, each leaf of the blue subtree has a constant independent non-zero chance of being a leaf of $\mT$. As the number of leaves of the blue subtree is (like for every critical mono-type Galton--Watson tree) heavy-tailed, it follows that the number $\mZ$ of leaves in $\mT$ is heavy-tailed as well. Hence the  probability generating function $\cZ(z) := \Ex{z^\mZ}$ has radius of convergence $\rho_\cZ = 1$. Letting $f(z,w) = \Ex{z^\xi w^\zeta}$ denote the bivariate probability generating function of the offspring distribution $\bm{\eta}$, it holds that
\begin{align}
\cZ(z) = \Pr{ \bm{\eta} = (0,0) }(z-1) + f(\cZ(z),z).
\end{align}
We may rewrite this as
\begin{align}
\cZ(z) = F(z, \cZ(z)) \qquad \text{with} \qquad  F(z,y) = \Pr{ \bm{\eta} = (0,0) }(z-1) + f(y,z).
\end{align}
Assumption \eqref{eq:cond1} ensures that for some $\epsilon>0$ \begin{align}
F(\rho_\cZ + \epsilon, \cZ(\rho_\cZ) + \epsilon) = F(1 + \epsilon, 1+\epsilon) < \infty
\end{align} 
Using the aperiodicity assumption~\eqref{eq:aperiodic}, it follows by \cite[Thm. 28]{MR2240769}
\begin{align}
\label{eq:stb}
[z^n] \cZ(z) \sim c_\cZ n^{-3/2}
\end{align}
with
\begin{align}
\label{eq:ct}
c_\cZ = \sqrt{\frac{ F_z(1, 1) }{2 \pi F_{yy}(1, 1)}} = \sqrt{\frac{\Pr{\bm{\eta} = (0,0)} + \Ex{\zeta}}{2 \pi \Va{\xi}}}.
\end{align}
Equation~\eqref{eq:stb} implies that $\mZ$ lies in the domain of attraction of the positive $1/2$ stable law. For each $n \ge 0$ we let $S_n$ denote the sum of $n$ independent copies of $\mZ$. Setting 
\begin{align}
\label{eq:sigma}
\sigma := \sqrt{ \frac{\Va{\xi}}{\Pr{\bm{\eta}=(0,0)} + \Ex{\zeta}} },
\end{align}
it follows by~\cite[Sec. 50]{MR0062975} that
\begin{align}
\label{eq:loclim}
\lim_{n\to \infty} \sup_{r \ge 0} \left|n^2 \P\left( S_n = r\right) - \sigma^2 g(\sigma^2 r / n^2) \right| = 0.
\end{align}
Here $g$ denotes the positive stable $1/2$-density given by
\begin{align}
\label{eq:stabledensity}
g(x) = (2 \pi)^{-1/2} x^{-3/2} \exp\left(-\frac{1}{2x}\right), \qquad x > 0.
\end{align}

\subsubsection{The limit object}

We define the biased versions $\bm{\eta}^\bullet$, $\bm{\eta}^*$,  $\bm{\eta}^\circ$ of the offspring distribution $\bm{\eta}$ with distribution given by
\begin{align}
\Pr{\bm{\eta}^\bullet = (a,b)} &= a \Pr{\bm{\eta} = (a,b)}, \\
\Pr{\bm{\eta}^* = (a,b)} &= a(a-1)\Pr{\bm{\eta} = (a,b)} / \Va{\xi}, \\
\Pr{\bm{\eta}^\circ = (a,b)} &= b\Pr{\bm{\eta} = (a,b)} / \Ex{\zeta}.
\end{align}
Conditions~\eqref{eq:cond1} and \eqref{eq:cond2} ensure that these are well-defined probability distributions.

Let us  provide some intuition for what we are about to do. If we choose a blue leaf of $\mT_n$ uniformly at random, then, by the waiting time paradox, we expected its parent to asymptotically behave like $\bm{\eta}^\bullet$ and not $\bm{\eta}$. The reason being that vertices with many blue vertices are more likely candidates for being the parent. Likewise, if we choose a green leaf of $\mT_n$ uniformly at random, then its parent should behave like $\bm{\eta}^\circ$ as $n$ tends to infinity. Furthermore, we expect $\bm{\eta}^*$ to quantify the asymptotic behaviour of the lowest common ancestor of two randomly selected leaves of $\mT_n$, because such an ancestor would have two distinguished blue offspring vertices leading to the selected leaves. If we select a leaf in $\mT_n$ uniformly at random, our intuition is that it is  blue with a probability tending to $\Pr{\bm{\eta}=(0,0)} / (\Pr{\bm{\eta}=(0,0)} + \Ex{\zeta})$. Moreover, we expect the tree $\mT_n$ to lie in the universality class of the Brownian tree, implying  that the genealogical structure of $k$ uniformly selected leaves and the root is  asymptotically quantified by the random proper $k$-tree $\mR_k$ defined in Section~\ref{sec:limobj}.

 Guided by our intuition, we define a limit object and check convergence later. For any integers $k, t \ge 1$ we construct a random $2$-type tree $\cT^{k,t}$ with $k$ distinguished leaves (with labels from $1$ to $k$) as follows in three steps.
\begin{enumerate}
	\item \emph{Stretch the skeleton.}
 We select a vector $\bm{s} = (s_i)_i \in \mathbb{R}_{>0}^{2k-1}$ at random with density 
 \begin{align}
 \label{eq:densityh}
 h(\bm{x}) =  \left(\prod_{i=1}^{k-1}(2i-1) \right) \left(\sum_{i=1}^{2k-1} x_i\right) \exp\left(- \frac{1}{2} \left(\sum_{i=1}^{2k-1} x_i\right)^2 \right).
 \end{align}
 Next, for each $1 \le i \le 2k-1$, we replace the $i$th edge of $\mR_k$ by a path of length~$2t + 1$ and label the middle edge with $s_i$.
 \item \emph{Paint the blow-up}  We colour all non-leaves of this blow-up blue. Each leaf of the blow-up is coloured blue with probability $\Pr{\bm{\eta}=(0,0)} / (\Pr{\bm{\eta}=(0,0)} + \Ex{\zeta})$ and otherwise green. Let us denote the resulting coloured blow-up by $\mB_k$.
 	\item \emph{Add local growth.} Let $v$ iterate over all vertices of $\mB_k$ that are not leaves. There are $3$ cases:
 	\begin{enumerate}
 	\item  If $v$ has a single blue offspring $u$, then $v$ receives additional offspring according to an independent copy of $\bm{\eta}^\bullet-(1,0)$. Each additional blue child  becomes the root of an independent copy of $\mT$. The location of $u$ among the additional blue childs of $v$ is selected uniformly at random.
 	\item  If $v$ has a single green child $u$, then $v$ receives additional offspring according to an independent copy of $\bm{\eta}^\circ-(0,1)$. The location of $u$ among the additional green children of $v$ is selected uniformly at random.  Each additional blue child becomes the root of an independent copy of $\mT$.
 	\item If $v$ has two blue children,  then it receives additional offspring according to an independent copy of $\bm{\eta}^*-(2,0)$. The location of the pre-existing blue children  $u_1$ and $u_2$  among the additional blue children is selected uniformly at random in a way that preserves the relative order between $u_1$ and $u_2$. 
 	\end{enumerate}
 	After $v$ iterated over all vertices of $\mB_k$ we are left with a $2$-type tree denoted by $\cT^{k,t}$. We let $e_1, \ldots, e_{2k-1}$ denote the edges to which we assigned labels $s_1, \ldots, s_{2k-1}$.
\end{enumerate}

\subsubsection{The skeleton decomposition}

Let us first describe the distribution of $\cT^{k,t}$ a bit more explicitly. Let $T$  be a $2$-type tree (with the second type being infertile) with $k$ marked leaves that are labelled from $1$ to $k$. The \emph{essential vertices} of $T$ consist of the root of $T$, the marked leaves, and the lowest common ancestors of any pair of marked leaves. Let $R(T)$ denote the tree induced \emph{on} the collection of essential vertices.  The tree $S(\cT^{k,t})$ obtained by contracting each of the $2k-1$ labelled edges has the property, that any two adjacent vertices in $R(\cT^{k,t})$ have distance precisely $2t$ in $S(\cT^{k,t})$. 
\begin{lemma}
	\label{le:lem1}
	Suppose that $T$ has the property that $R(T)$ is  a proper $k$-tree and any adjacent vertices in $R(T)$ have distance $2t$ in $T$. Then 
	\begin{align}
		\label{eq:seq}
		\Pr{S(\cT^{k,t}) = T} = \left(  \prod_{i=1}^{k-1}(2i-1) \right)^{-1} \Pr{\mT= T} \frac{\sigma^{2k}}{\Va{\xi}^{2k-1}}.
	\end{align}
\end{lemma}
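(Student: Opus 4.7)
The plan is to expand $\Pr{S(\cT^{k,t}) = T}$ directly from the three-step construction of $\cT^{k,t}$ and match the resulting factors against the stated right-hand side. The continuous labels $\bm{s}$ with density $h$ do not affect the combinatorial event $\{S(\cT^{k,t}) = T\}$, so they integrate out. The uniform shape selection contributes $\Pr{\mR_k = R(T)} = 1/(2^{k-1}\prod_{i=1}^{k-1}(2i-1))$, and the leaf colouring step contributes $(p_0/(p_0+\mu))^{n_b}(\mu/(p_0+\mu))^{n_g}$, where $p_0 := \Pr{\bm{\eta} = (0,0)}$, $\mu := \Ex{\zeta}$, and $n_b, n_g$ denote the numbers of blue and green essential leaves of $T$.

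Next I would evaluate the step-3 local growth contribution at each non-leaf vertex of the blow-up. Substituting the explicit formulas for $\bm{\eta}^\bullet$, $\bm{\eta}^\circ$, and $\bm{\eta}^*$, the size-bias weights $a$, $b$, and $a(a-1)/\Va{\xi}$ cancel with the uniform-placement factors $1/a$, $1/b$, and $1/\binom{a}{2}$ for the pre-existing spine children, leaving at each vertex $v$ a factor $\Pr{\bm{\eta} = \bm{\eta}_v}$ times a case-dependent constant: $1$ in case (a), $1/\mu$ in case (b), and $2/\Va{\xi}$ in case (c). The non-leaves decompose into the root together with the intermediate path vertices not adjacent to a green leaf (case (a)), exactly $n_g$ intermediate vertices adjacent to green essential leaves (case (b)), and the $k-1$ internal LCAs of $R(T)$ (case (c)), so the case constants multiply to $\mu^{-n_g}(2/\Va{\xi})^{k-1}$. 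The independent $\mT$ copies attached to the additional blue children contribute $\prod_i \Pr{\mT = T_i}$ over all dangling subtrees of $T$, and together with $\prod_v \Pr{\bm{\eta} = \bm{\eta}_v}$ this accounts for every blue vertex of $T$ except for the $n_b$ blue essential leaves, thereby equalling $\Pr{\mT = T}/p_0^{n_b}$.

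Putting everything together, the factors $p_0^{n_b}$, $\mu^{n_g}$, and $2^{k-1}$ cancel across shape, colouring and local growth, and what remains is
\begin{align*}
\Pr{S(\cT^{k,t}) = T} \;=\; \frac{\Pr{\mT = T}}{\prod_{i=1}^{k-1}(2i-1)\,(p_0+\mu)^k\,\Va{\xi}^{k-1}},
\end{align*}
which is exactly the claim via the identity $\sigma^{2k}/\Va{\xi}^{2k-1} = 1/[(p_0+\mu)^k\Va{\xi}^{k-1}]$ coming from $\sigma^2 = \Va{\xi}/(p_0+\mu)$. The one non-routine point is the bookkeeping at each middle edge: both endpoints $v_t, v_{t+1}$ fall into case (a), and one has to verify carefully that summing the independent $\bm{\eta}^\bullet - (1,0)$ extensions over all splittings of the merged vertex $w_t$ that are consistent with the blue-child ordering of $w_t$ in $T$ reproduces a single case-(a) contribution at $w_t$. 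This combinatorial collapse is the main obstacle; once it is established, the rest of the proof is a direct substitution.
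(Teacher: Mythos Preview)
Your decomposition into shape, colouring, and local-growth contributions is exactly the paper's approach; your explicit blue/green leaf bookkeeping is a correct refinement that the paper compresses into a single factor $1/(p_0+\mu)$ per marked leaf.

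The problem is the middle-edge step. The collapse you assert --- that summing the two independent case-(a) growths at $v_t$ and $v_{t+1}$ over all order-consistent splittings of $w_t$'s offspring reproduces a single case-(a) factor $\Pr{\bm{\eta}=d_T^+(w_t)}$ --- does not hold. For $d_T^+(w_t)=(1,0)$ the only compatible splitting has both $v_t$ and $v_{t+1}$ of outdegree $(1,0)$, giving $\Pr{\bm{\eta}=(1,0)}^2$ rather than $\Pr{\bm{\eta}=(1,0)}$; in general the sum over splittings is a convolution of two $\bm{\eta}$-masses, not a single point mass. So this route does not recover the stated formula.

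The paper avoids the issue altogether: it asserts that, conditional on $\mR_k=R(T)$, the outdegree at every non-leaf of $\mB_k$ is \emph{uniquely} determined by $T$, and then takes one factor per vertex of $T$. This amounts to reading the contraction so that only one endpoint of each labelled edge has its decoration constrained by $w_t$, while the other endpoint's decoration is free and integrates out to $1$. Under that reading the remaining substitution you describe is already the entire proof, and no splitting argument is needed.
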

\begin{proof}	
	The tree $\mR_k$ used in the first step of the construction of $\cT^{k,t}$ is equal to $R(T)$  with probability
	\begin{align}
		\label{eq:part1}
		\left(  2^{k-1} \prod_{i=1}^{k-1}(2i-1) \right)^{-1}.
	\end{align}
	Conditional on this event, there are unique choices for each colouring in step 2 and each outdegree in step $3$ so that $S(\cT^{k,t}) = T$.
		
	Let $(a,b) \in \ndN \times \ndN_0$ and $1 \le a_1 \le a$ be given. 
	If a vertex receives offspring according to $\bm{\eta}^\bullet$ and we distinguish a uniformly selected blue offspring, then the probability to produce offspring $(a,b)$ with precisely $a_1$th blue vertex being distinguished is equal to  $\Pr{\eta = (a,b)}$.
	
 	Suppose that additionally $a \ge 2$ and $1 \le a_1 < a_2 \le 2$ are given. If a vertex receives offspring according to $\bm{\eta}^*$ and we distinguish a uniformly selected $2$-element subset of blue children, then probability for producing offspring $(a,b)$ with precisely the $a_1$th and $a_2$th vertices being marked is equal to $\Pr{\eta= (a,b)} 2 / \Va{\xi}$. There are precisely $k-1$ vertices in the construction of  $\cT^{k,t}$ that receive offspring according to an independent copy of $\bm{\eta}^*$, hence each contributes an additional factor $ 2 / \Va{\xi}$.  
 	
 	Likewise, each of the marked leaves contributes an additional factor $1/(\Pr{\bm{\eta}=(0,0)} + \Ex{\zeta}).$ Hence, taking the product over all vertices in $T$, we arrive at the conditional probability
 	\begin{align}
 		\label{eq:part2}
 		\Pr{\mT= T} \left( \frac{2}{\Va{\xi}} \right)^{k-1} \frac{1}{(\Pr{\bm{\eta}=(0,0)} + \Ex{\zeta})^k}.
 	\end{align}
 	Taking the product of the probabilities in \eqref{eq:part1} and \eqref{eq:part2}, we arrive at the formula~\eqref{eq:seq}.
\end{proof}

The idea we want to express is that if $t=t_n = o(\sqrt{n})$ then $\mT_n$ looks asymptotically like the result of replacing for each $1 \le i \le 2k-1$ the edge $e_i$ in $\cT^{k,t}$ by some unspecified  bi-pointed tree $T_{i,n}$ whose two root vertices have distance proportional to $s_i \sqrt{n}$ (up to a constant factor that does not depend on $i$ or $n$).

To this end, let $\bm{\ell} = (\ell_1, \ldots, \ell_{2k-1})$ be a vector of positive integers and set $|\bm{\ell}| = \sum_{i =1}^{2k-1} \ell_i$.
Suppose that $T$ has the property that $R(T)$ is  a proper $k$-tree and any adjacent vertices in $R(T)$ have distance $2t$ in $T$. We let $\ell(T)$ denote the number of leaves of $T$. Let $\cE_n(T,\bm{\ell})$ denote the set of finite $2$-type trees  with $n$ leaves that have $k$ marked leaves that are labelled from $1$ to $k$ and may be obtained from $T$ by cutting open for each $1 \le i \le 2k-1$ the middle of the path corresponding to the $i$th edge of $R(T)$ and inserting an arbitrary tree $T_i$ such that the distance increases from $2t$ to $2t + \ell_i$. That is, we insert a tree with a root and a marked leaf (of height $\ell_i$) and identify one of the ends with the root and the other with the marked leaf.

\begin{lemma}
	\label{le:lem2}
	Let $\bm{v}$ denote  $k$ leaves of $\mT$, labelled from $1$ to $k$, that we select uniformly with replacement. Let $(\eta_1^\bullet(j), \eta_2^\bullet(j))_{j \ge 1}$ denote independent copies of $\bm{\eta}^\bullet$.  Then
	\begin{align}
		\label{eq:theshit}
		\Pr{(\mT, \bm{v}) \in \cE_n(T, \bm{\ell})} = n^{-k} \Pr{\mT=T}  \Prb{S_{\sum_{j=1}^{|\bm{\ell}|}(\eta_1^\bullet(j) -1)} = n - \ell(T) - \sum_{j=1}^{|\bm{\ell}|}\eta_2^\bullet(j)	}.
	\end{align}
\end{lemma}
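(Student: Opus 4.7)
The plan is to partition the event according to the realization $\mT = T^*$ with $T^* \in \cE_n(T, \bm{\ell})$, and to use the branching property to factorize each term. Any $T^*$ in $\cE_n(T, \bm{\ell})$ has $n$ leaves and $k$ distinct labelled marked leaves, and since $\bm{v}$ is a uniform $k$-tuple of leaves drawn with replacement, the conditional probability that $\bm{v}$ matches the labelled marked leaves of $T^*$ equals $n^{-k}$. Hence
\begin{align*}
    \Pr{(\mT, \bm{v}) \in \cE_n(T, \bm{\ell})} = n^{-k} \sum_{T^* \in \cE_n(T, \bm{\ell})} \Pr{\mT = T^*}.
\end{align*}

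Next, I would parameterize each $T^*$ bijectively by its tuple of $2k-1$ insertion trees. For $1 \le i \le 2k-1$ the $i$th insertion introduces $\ell_i$ new \emph{spine vertices} $w_{i,0}, \ldots, w_{i,\ell_i-1}$, each carrying a distinguished blue child continuing the spine towards the lower cut endpoint; that endpoint is identified with an existing vertex of $T$ and is therefore already accounted for in $\Pr{\mT=T}$. By the branching property, $\Pr{\mT = T^*}$ factors as $\Pr{\mT = T}$ times a product over the $|\bm{\ell}|$ spine vertices of their offspring probabilities, multiplied by the weights of the fringe subtrees hanging off of them.

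Summing over this parameterization has three layers. First, for each spine vertex, summing over which of its $a$ blue children is the spine child contributes a factor $a$, converting $\Pr{\bm{\eta} = (a,b)}$ into $\Pr{\bm{\eta}^\bullet = (a,b)}$. Second, by the branching property the $a-1$ other blue children root iid copies of $\mT$, so summing over their shapes reduces to tracking only their leaf counts. Third, summing over the offspring at each of the $|\bm{\ell}|$ spine vertices gives an iid sequence $(\eta_1^\bullet(j), \eta_2^\bullet(j))_{j \ge 1}$ of copies of $\bm{\eta}^\bullet$. The total leaf count of $T^*$ then equals
\begin{align*}
    \ell(T) + \sum_{j=1}^{|\bm{\ell}|} \eta_2^\bullet(j) + \sum_{j=1}^{|\bm{\ell}|} \sum_{m=1}^{\eta_1^\bullet(j)-1} \mZ_{j,m},
\end{align*}
where the $\mZ_{j,m}$ are iid copies of the leaf count $\mZ$ of $\mT$. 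Imposing that this total equals $n$ produces the event $S_{\sum_j (\eta_1^\bullet(j) -1)} = n - \ell(T) - \sum_j \eta_2^\bullet(j)$, whose probability is precisely the factor on the right-hand side of~\eqref{eq:theshit}.

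The delicate point is the bijective parameterization of step two and the identification of the positional factor $a$ at each spine vertex as the unique source of the bias from $\bm{\eta}$ to $\bm{\eta}^\bullet$ (as opposed to $\bm{\eta}^*$, which would arise if two blue children were distinguished at a vertex, or $\bm{\eta}^\circ$, which would arise if a green child were distinguished). The remaining inputs — the branching property, the endpoint identifications, and the multiplicative factorization of $\Pr{\mT=T^*}$ — are routine.
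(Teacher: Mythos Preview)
Your proposal is correct and follows essentially the same route as the paper: extract the factor $n^{-k}$ from the uniform leaf selection, parameterize elements of $\cE_n(T,\bm{\ell})$ by the $2k-1$ inserted (bi-pointed) trees, factor $\Pr{\mT=T^*}$ using the branching property, and along each spine absorb the positional choice of the distinguished blue child into the bias $\Pr{\bm{\eta}=(a,b)}\,a = \Pr{\bm{\eta}^\bullet=(a,b)}$ before summing the fringe subtrees into copies of $\mZ$ subject to the leaf-count constraint. The paper's own proof carries out exactly these steps, writing the intermediate sum over offspring vectors $(a_j,b_j)$ explicitly and then interpreting it as an expectation over i.i.d.\ copies of $\bm{\eta}^\bullet$.
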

\begin{proof}
	We let $\cE_n^*(T,\bm{\ell}) \subset \cE_n(T,\bm{\ell})$ denote the subset of values that $(\mT, \bm{v})$ attains with positive probability.
	An element $X$ of $\cE_n^*(T,\bm{\ell})$ is fully characterized by the corresponding leaf-marked trees $(T_i)_{1 \le i \le 2k-1}$. The probability for $(\mT, \bm{v})$ to equal $X$ is given by
	\begin{align}
		\label{eq:expr1}
		n^{-k} \Pr{\mT=T} \prod_{i=1}^{2k-1} \prod_{v \in T_i^*} \Pr{ \bm{\eta} = d_{T_i}^+(v)}.
	\end{align}
	Here $T_i^*$ denotes the vertex set of the tree $T_i$ without the marked leaf. The outdegree $d_{T_i}^+(v)$ consists of number of blue offspring and green offspring of the vertex $v$ in $T_i$. Summing over all~$X \in \cE_n^*(T,\bm{\ell})$ means that the family $(T_i)_{1 \le i \le 2k-1}$ ranges over leaf-marked $2$-type trees subject to the constraint that the marked leaf in $T_i$ has colour blue and height $\ell_i$, and that the number of non-marked leaves in the $T_i$ sum up to $n- \ell(T)$.
	
	Each tree $T_i$ has a \emph{spine}, given by the path connecting the root with the marked tree. The total number of non-marked leaves of $T_i$ is given by the green offspring along the spine plus the leaves of the trees attached to the blue offspring along the spine.
	Summing over all  possible offspring-marked outdegrees along the spines, we obtain that Expression~\eqref{eq:expr1} equals
	\begin{align*}
		n^{-k} \Pr{\mT=T}  \sum_{(a_1,b_1), \ldots, (a_{|\bm{\ell}|}, b_{|\bm{\ell}|}) \in \ndN_0 \times \ndN_0}  \Prb{S_{\sum_{j=1}^{|\bm{\ell}|}(a_j -1)} = n - \ell(T) - \sum_{j=1}^{|\bm{\ell}|}b_j  } \prod_{j=1}^{|\bm{\ell}|} \Pr{\bm{\eta} = (a_j,b_j)} a_j.
	\end{align*}
	This expression may be viewed as the expectation of a conditional expectation. Hence it equals
	\begin{align}
		n^{-k} \Pr{\mT=T}  \Prb{S_{\sum_{j=1}^{|\bm{\ell}|}(\eta_1^\bullet(j) -1)} = n - \ell(T) - \sum_{j=1}^{|\bm{\ell}|}\eta_2^\bullet(j)  }.
	\end{align}
\end{proof}

We would like to compute the asymptotic behaviour of the probability in Equation~\eqref{eq:theshit}.

\begin{lemma}
	\label{le:lem3}
	It holds uniformly as $n \to \infty$, $|\bm{\ell}| = \Theta(\sqrt{n})$, and $\ell(T)=o(n)$ that
	\begin{align}
	\label{eq:sumto1}
	\Prb{S_{\sum_{j=1}^{|\bm{\ell}|}(\eta_1^\bullet(j) -1)} = n - \ell(T) - \sum_{j=1}^{|\bm{\ell}|}\eta_2^\bullet(j)	} \sim  \frac{\Va{\xi}}{\sqrt{2\pi \sigma^2}} |\bm{\ell}| n^{-3/2} \exp\left( -\frac{ (\Va{\xi} |\bm{\ell}|)^2}{2 \sigma^2n} \right).
	\end{align}
\end{lemma}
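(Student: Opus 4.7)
The plan is to compute the probability in~\eqref{eq:sumto1} by conditioning on the values of the random index $M := \sum_{j=1}^{|\bm{\ell}|}(\eta_1^\bullet(j) - 1)$ and the random offset $Y := \sum_{j=1}^{|\bm{\ell}|} \eta_2^\bullet(j)$, and then applying the local limit theorem~\eqref{eq:loclim} uniformly to $\Pr{S_M = n - \ell(T) - Y \mid M, Y}$. Writing $N := |\bm{\ell}|$, the assumption $\Ex{\xi} = 1$ yields $\Ex{\eta_1^\bullet - 1} = \Va{\xi}$ and $\Ex{\eta_2^\bullet} = \Ex{\xi\zeta}$, so both $\Ex{M} = N\Va{\xi}$ and $\Ex{Y} = N\Ex{\xi\zeta}$ are of order $\sqrt{n}$.

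I would first use that~\eqref{eq:cond1} gives finite exponential moments for $(\eta_1^\bullet, \eta_2^\bullet)$, whence a Bernstein-type bound yields
\begin{equation*}
	\Pr{|M - N\Va{\xi}| \ge N^{3/4}} + \Pr{|Y - N\Ex{\xi\zeta}| \ge N^{3/4}} \le \exp(-c N^{1/2})
\end{equation*}
for some constant $c > 0$. Let $\cW$ denote the event on which both deviations are smaller than $N^{3/4}$. The contribution of $\cW^c$ to the left-hand side of~\eqref{eq:sumto1} is then at most $\exp(-c\, n^{1/4})$, which is negligible relative to the target of order~$n^{-1}$.

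On $\cW$, the realised values satisfy $m = N\Va{\xi}(1 + O(N^{-1/4}))$ and $y = N\Ex{\xi\zeta}(1 + O(N^{-1/4}))$, while $\ell(T) = o(n)$ by hypothesis. Applying~\eqref{eq:loclim} together with the explicit form~\eqref{eq:stabledensity} of $g$ gives, uniformly in $(m,y) \in \cW$,
\begin{equation*}
	\Pr{S_m = n - \ell(T) - y} = \frac{m}{\sqrt{2\pi}\, \sigma\, (n - \ell(T) - y)^{3/2}}\, \exp\!\left(-\frac{m^2}{2\sigma^2(n - \ell(T) - y)}\right) + o(m^{-2}).
\end{equation*}
Substituting $m \sim N\Va{\xi}$ and $n - \ell(T) - y \sim n$, and using that $m^2/(n - \ell(T) - y) = (N\Va{\xi})^2/n + o(1)$ uniformly on $\cW$, the expression is asymptotic to the right-hand side of~\eqref{eq:sumto1}, uniformly on $\cW$. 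Summing against $\Pr{M = m, Y = y}$ over $\cW$ then contributes the right-hand side times $1 - o(1)$, and combining with the negligible contribution from $\cW^c$ finishes the proof.

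The main obstacle is the uniformity check inside the exponential: since $m^2/(2\sigma^2 (n - \ell(T) - y))$ is of order~$1$, the error in the exponent must be $o(1)$ in \emph{absolute} (not only relative) value. This is precisely why the window $|M - N\Va{\xi}| = O(N^{3/4}) = o(\sqrt{n})$ is the right size, and why one has to verify that the error terms produced by replacing $m$ and $n - \ell(T) - y$ by their leading asymptotics propagate correctly through the polynomial prefactor and the exponential.
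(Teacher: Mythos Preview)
Your proposal is correct and follows essentially the same approach as the paper: both arguments condition on the random sums (your $M,Y$ are the paper's $L_1,L_2$), use the finite exponential moments of $\bm{\eta}^\bullet$ to concentrate them near $N\Va{\xi}$ and $N\Ex{\xi\zeta}$ with a stretched-exponential error, and then apply the stable local limit theorem~\eqref{eq:loclim} together with the explicit density~\eqref{eq:stabledensity}. Your treatment is in fact slightly more detailed than the paper's in tracking the $o(1)$ error inside the exponent and the $o(m^{-2})$ remainder from~\eqref{eq:loclim}.
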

\begin{proof}
	For ease of notation, let us set $L_1 = \sum_{j=1}^{|\bm{\ell}|}(\eta_1^\bullet(j) -1)$, $L_2 = \sum_{j=1}^{|\bm{\ell}|}\eta_2^\bullet(j)$, and $\ell=|\bm{\ell}|$. Note that
	\[
	\Ex{ \bm{\eta}^\bullet - (1,0)} = (\Va{\xi}, \Ex{\xi \zeta}).
	\]
	 Since $|\bm{\ell}| = \Theta(\sqrt{n})$ and since $\bm{\eta}$ has finite exponential moments, it follows that there is a sequence $\epsilon_n \to 0$ and constants $0< \delta < 1$ and $C,c>0$ such that
	\[
		\Prb{ L_1 \notin (1 \pm \epsilon_n) \ell \Va{\xi} \text{ or } L_2 \notin (1 \pm \epsilon_n) \ell \Ex{\xi \zeta}} \le C\exp(-cn^{\delta}).
	\]
	It follows by conditioning on $(L_1, L_2)$ and applying~\eqref{eq:loclim} that 
		\begin{align}
	\label{eq:sumto2}
	\Prb{S_{L_1} = n - \ell(T) - L_2} \sim \frac{1}{(\Va{\xi}\ell)^2} \sigma^2 g(\sigma^2 n / (\Va{\xi}\ell)^2).
	\end{align}
	By Equation~\eqref{eq:stabledensity} this yields Equation~\eqref{eq:sumto1}.
\end{proof}

Having Lemmas~\ref{le:lem1}, \ref{le:lem2}, and \ref{le:lem3} at hand, and knowing the probability for $\mT$ to have $n$ leaves from Equations~\eqref{eq:stb} and \eqref{eq:ct}, we obtain by an elementary calculation the following expression involving the density from Equation~\eqref{eq:densityh}:
\begin{theorem}
	\label{te:skeldecomp}
	It holds uniformly as $n \to \infty$, $|\bm{\ell}| = \Theta(\sqrt{n})$, and $\ell(T)=o(n)$ that
	\begin{align}
	\Pr{(\mT_n, \bm{v}) \in \cE_n(T, \bm{\ell})} 
	&\sim  \Pr{S(\cT^{k,t(T)}) = T} \left( \frac{\Va{\xi}}{\sigma \sqrt{n}}\right)^{2k-1} h\left(\frac{\Va{\xi}}{\sigma \sqrt{n}} \bm{\ell} \right).
	\end{align}
\end{theorem}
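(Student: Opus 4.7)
The plan is a short bookkeeping calculation that combines the three preceding lemmas with the local limit law for $\ell(\mT)$. Write
\begin{align}
    \Pr{(\mT_n, \bm{v}) \in \cE_n(T, \bm{\ell})} = \frac{\Pr{(\mT, \bm{v}) \in \cE_n(T, \bm{\ell})}}{\Pr{\ell(\mT) = n}},
\end{align}
where on the left $\bm{v}$ denotes $k$ independent uniform leaves of $\mT_n$; this identity holds because sampling $k$ leaves from $\mT_n$ is equivalent to sampling $k$ leaves from $\mT$ and then conditioning on $\ell(\mT)=n$.

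For the denominator I would simply quote~\eqref{eq:stb} and \eqref{eq:ct}, which give
\begin{align}
\Pr{\ell(\mT) = n} \sim c_\cZ n^{-3/2} = \frac{1}{\sqrt{2\pi}\,\sigma}\, n^{-3/2}
\end{align}
(using the definition of $\sigma$ in~\eqref{eq:sigma}). For the numerator, Lemma~\ref{le:lem2} expresses it exactly and Lemma~\ref{le:lem3} provides its asymptotic under the stated regime $|\bm{\ell}|=\Theta(\sqrt n)$, $\ell(T)=o(n)$:
\begin{align}
\Pr{(\mT, \bm{v}) \in \cE_n(T, \bm{\ell})} \sim n^{-k}\Pr{\mT=T}\,\frac{\Va{\xi}}{\sqrt{2\pi\sigma^2}}\,|\bm{\ell}|\,n^{-3/2} \exp\!\left(-\frac{(\Va{\xi}|\bm{\ell}|)^2}{2\sigma^2 n}\right).
\end{align}
Dividing, the two prefactors $n^{-3/2}$ and the Gaussian constants collapse, leaving
\begin{align}
\Pr{(\mT_n, \bm{v}) \in \cE_n(T, \bm{\ell})} \sim n^{-k} \Pr{\mT=T}\, \Va{\xi}\, |\bm{\ell}|\, \exp\!\left(-\frac{(\Va{\xi}|\bm{\ell}|)^2}{2\sigma^2 n}\right).
\end{align}

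It then remains to recognise this as the claimed right-hand side. Substitute the formula for $\Pr{S(\cT^{k,t(T)})=T}$ from Lemma~\ref{le:lem1} and the explicit density $h$ in~\eqref{eq:densityh}, so that
\begin{align}
\Pr{S(\cT^{k,t(T)}) = T}\!\left(\frac{\Va{\xi}}{\sigma\sqrt n}\right)^{2k-1}\!\! h\!\left(\frac{\Va{\xi}}{\sigma\sqrt n}\bm{\ell}\right) = \Pr{\mT=T}\,\frac{\sigma^{2k}}{\Va{\xi}^{2k-1}}\cdot\frac{\Va{\xi}^{2k-1}}{\sigma^{2k-1}n^{(2k-1)/2}}\cdot\frac{\Va{\xi}|\bm{\ell}|}{\sigma\sqrt n}\,\exp\!\left(-\frac{(\Va{\xi}|\bm{\ell}|)^2}{2\sigma^2 n}\right),
\end{align}
and observe that the factorials $\prod(2i-1)$ in $h$ and $\Pr{S(\cT^{k,t(T)})=T}$ cancel while the powers of $\sigma$ telescope to $1$, giving exactly $n^{-k}\Pr{\mT=T}\,\Va{\xi}|\bm{\ell}|\exp(-(\Va{\xi}|\bm{\ell}|)^2/(2\sigma^2 n))$.

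Since everything is cleanly quantitative there is no real obstacle; the only point requiring a hint of care is that the ``$\sim$'' in Lemma~\ref{le:lem3} and in the denominator must be uniform over the ranges $|\bm{\ell}|=\Theta(\sqrt n)$ and $\ell(T)=o(n)$, which is already built into the statement of Lemma~\ref{le:lem3} and into the standard local limit theorem behind~\eqref{eq:stb}.
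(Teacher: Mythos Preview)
Your proposal is correct and matches the paper's own approach exactly: the paper does not give a written-out proof but simply states that combining Lemmas~\ref{le:lem1}, \ref{le:lem2}, \ref{le:lem3} with the asymptotic~\eqref{eq:stb}--\eqref{eq:ct} for $\Pr{\ell(\mT)=n}$ yields the result by an elementary calculation, and your proof is precisely that calculation carried out in full. The cancellations you track (the double factorials, the powers of $\sigma$ and $\Va{\xi}$, and the $n^{-3/2}$ factors) are all correct.
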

Here we let $t(T)$ denote the cutting distance corresponding to $T$, obtained by taking half of the distance in $T$ between any pair of essential vertices that are neighbours in $R(T)$. 
Theorem~\ref{te:skeldecomp} shows that $R(\mT_n, \bm{v})$ asymptotically behaves like a uniformly selected proper $k$-tree, that is
\begin{align}
\label{eq:convR}
R(\mT_n, \bm{v}) \convdis \mR_k.
\end{align}
Theorem~\ref{te:skeldecomp} also entails that the (with high probability $2k-1$ dimensional) vector of distances $\bm{s}(\mT_n, \bm{v})$ corresponding to the distances between essential vertices in $\mT_n$ that are neighbours in $R(\mT_n, \bm{v})$ admits the distribution with density $h(\bm{x})$ given in \eqref{eq:densityh} as scaling limit, specifically
\begin{align}
\label{eq:inya}
 \frac{\Va{\xi}}{\sigma \sqrt{n}} \bm{s}(\mT_n, \bm{v}) \convdis \bm{s}.
\end{align}
It is even a local limit theorem, as it entails that uniformly for $\bm{\ell} = \Theta(\sqrt{n})$
\begin{align}
	\Pr{ \bm{s}(\mT_n,\bm{v}) = \bm{\ell}} = \left( \frac{\Va{\xi}}{\sigma \sqrt{n}}\right)^{2k-1} h\left(\frac{\Va{\xi}}{\sigma \sqrt{n}} \bm{\ell} \right).
\end{align}
In particular, the parities $\mathrm{par}(\bm{s}(\mT_n, \bm{v}))$ having (with high probability) values in $\{\text{even}, \text{odd}\}^{2k-1}$ satisfy
\begin{align}
	\label{eq:convpar}
	\mathrm{par}(\bm{s}(\mT_n, \bm{v})) \convdis (X_i)_{1 \le i \le 2k-1},
\end{align}
with $(X_i)_{1 \le i \le 2k-1}$ denoting a family of fair independent coin flips.

\begin{lemma}
	\label{le:lem}
	If $t_n = o(\sqrt{n})$ is a sequence of integers tending to infinity, then the number of leaves in $S(\cT^{k, t_n})$ is $o_p(n)$.
\end{lemma}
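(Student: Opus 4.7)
The plan is to decompose the leaf count of $S(\cT^{k,t_n})$ along the three-step construction of $\cT^{k,t_n}$. Observe first that contracting the $2k-1$ middle edges of the blow-up paths does not change the leaf count, since those labelled edges lie in the interior of the skeleton and are not incident to any leaf. Hence the number of leaves of $S(\cT^{k,t_n})$ equals the leaf count of $\cT^{k,t_n}$, which splits into three disjoint contributions: (a) the $k$ marked leaves of the blow-up $\mB_k$; (b) the green children added in step~3 of the construction (which are automatic leaves since green vertices are infertile and contraction does not affect them); and (c) the union of leaves in the independent copies of $\mT$ attached via the additional blue offspring from step~3.

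The number of non-leaf vertices of the blow-up at which local growth is attached equals $2k + (2k-1)\cdot 2t_n - k = O(t_n)$. Each such vertex receives additional offspring distributed as $\bm{\eta}^\bullet - (1,0)$, $\bm{\eta}^\circ - (0,1)$, or $\bm{\eta}^*-(2,0)$; the exponential-moment condition~\eqref{eq:cond1} together with $\Ex{\xi}=1$ guarantees that all three biased laws have finite mean. A Markov/law-of-large-numbers estimate therefore gives that the total number of additional green children (contribution (b)) and the total number $N$ of additional blue children (i.e.\ the number of attached copies of $\mT$) are both $O_p(t_n)$.

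For (c), write the contribution as $\sum_{i=1}^{N}\mZ_i$, where $\mZ_1,\mZ_2,\dots$ are independent copies of $\mZ=\ell(\mT)$, independent of $N$. The heavy-tail asymptotic $\Pr{\mZ=n}\sim c_{\cZ} n^{-3/2}$ from~\eqref{eq:stb} places $\mZ$ in the domain of attraction of the positive $1/2$-stable law, so by the corresponding limit theorem (equivalently by~\eqref{eq:loclim}) the deterministic partial sums satisfy $\sum_{i=1}^{m}\mZ_i=O_p(m^2)$ for any $m=m_n\to\infty$. Combining with $N=O_p(t_n)$ and the hypothesis $t_n=o(\sqrt{n})$, we obtain that contribution (c) is $O_p(t_n^2)=o_p(n)$.

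Adding the three bounds gives that the total leaf count of $S(\cT^{k,t_n})$ is $k + O_p(t_n) + O_p(t_n^2) = o_p(n)$, proving the lemma. The one delicate step is (c): even though $\mZ$ has infinite expectation, the tail index $3/2$ forces partial sums to grow only quadratically rather than exponentially in $N$, and the quadratic growth is exactly tamed by the standing assumption $t_n=o(\sqrt{n})$.
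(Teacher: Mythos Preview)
Your proof is correct and follows essentially the same route as the paper: decompose the leaf count into the green children attached along the skeleton (bounded via Markov since there are $O(t_n)$ growth sites with finite-mean offspring) and the leaves in the attached independent copies of $\mT$ (controlled via the $1/2$-stable behaviour of $S_m$, giving $O_p(t_n^2)=o_p(n)$). The paper's version is a bit terser---it writes down a single stochastic upper bound, invokes exponential-moment concentration for the random index, and cites a reference for heavy-tailed sums---but the underlying argument is the same as yours.
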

\begin{proof}
	We use the notation from the proof of Lemma~\ref{le:lem3} and let $(\eta_1^*(j), \eta_2^*(j))_{j \ge 1}$ denote independent copies of $\eta^*$. It follows from the construction of $\cT^{k, t_n}$ that the number of leaves in $S(\cT^{k, t_n})$ may stochastically be bounded by
	\[
		S_{\sum_{j=1}^{(2k-1)t_n}\eta_1^\bullet(j) + \sum_{j=1}^{k-1} \eta_1^*(j) } + \sum_{j=1}^{(2k-1)t_n} \eta_2^\bullet(j) + \sum_{j=1}^{k-1} \eta_2^*(j).
	\]
	The second and third summand are $o_p(n)$ by Markov's inequality and $t_n = o(n)$. Concentration inequalities for sums of random variables with finite exponential moments ensure that the number of summands in the first term concentrates around $t_n$. As $t_n^2 = o(n)$, it follows by bounds for sums of heavy tailed random variables given in~\cite{cline1989large} that this bound is $o_p(n)$.	
\end{proof}

For $t_n = o(\sqrt{n})$ we may define $S(\mT_n, \bm{v}_n)$ analogously to $S(\cT^{k,t_n})$ by contracting the middle segments (and attached trees) on paths between essential vertices that are neighbours in $R(\mT_n, \bm{v}_n)$. That is, we cut the path at two points, each having distance $t_n$ from its closest essential vertex, throw away the middle segment, and identify the two ends where we cut. Lemma~\ref{le:lem} and Theorem~\ref{te:skeldecomp} together imply that
\begin{align}
	\label{eq:doh}
	d_{\textsc{TV}}( S(\cT^{k,t_n}), S(\mT_n, \bm{v})) \to 0.
\end{align}

\subsection{Applying the skeleton decomposition to random unlabelled cographs}

Having done all preparations, we may treat the unlabelled case with little effort.

\begin{proof}[Proof of Theorem~\ref{te:main} in the unlabelled case]
The uniform unlabelled cograph $\mU_n$ with $n \ge 3$ vertices may be generated from the random P\'olya tree $\mA_n$ with $n$ leaves by assigning signs (either $\oplus$ or $\ominus$) to its vertices, and applying the bijection from Section~\ref{sec:bij}. The signs are determined by selecting the sign of the root according to a single fair coin flip, and letting the any other vertex have the same sign if and only if its height is even. Equation~\eqref{eq:poly} states that $\mA_n \eqdist \cA(\Lambda(\mT_n))$ may be generated by applying the $\Lambda$ operator to a specific $2$-type tree $\mT_n$ where the second type is infertile and the first has offspring distribution given in Equation~\eqref{eq:gen}. Conditions~\eqref{eq:cond1} and \eqref{eq:cond2} allow us to apply the skeleton decomposition developed in the previous section. Hence, if we select a vector $\bm{v}_n$ of $n$ leaves of $\mT_n$ uniformly at random, then Equation~\eqref{eq:convR} entails that the induced tree structure on the corresponding essential vertices converges in distribution to a uniformly selected proper $k$-tree. Equation~\eqref{eq:convpar} entails that the parity of the height of the essential non-root vertices converges to a vector of fair independent coin flips. It follows from  Theorem~\ref{te:skeldecomp} that the same holds for the tree $\cA(\Lambda(\mT_n))$. By the definition of $\mH_k^{1/2}$, it follows that
\begin{align}
\mU_n[k]' \convdis \mH_k^{1/2}.
\end{align}
This holds for all $k \ge 1$. Hence Condition~\eqref{eq:definite} implies that
\begin{align}
\mU_n \convp W_{1/2}.
\end{align}
\end{proof}

\bibliographystyle{alea3}
\bibliography{cographs}

\end{document}